\title{
Construction of Lyapunov functions for interconnected parabolic systems: an iISS approach
\thanks{This work was supported by Japanese Society for Promotion of Science (JSPS).}
} 
\author{Andrii Mironchenko \thanks{Andrii Mironchenko is with Department of Systems Design and Informatics, Kyushu Institute of Technology, 680-4 Kawazu, Iizuka, Fukuoka 820-8502, Japan 
(\email{andrii.mironchenko@mathematik.uni-wuerzburg.de}). Corresponding author.}
\and Hiroshi Ito
\thanks{Hiroshi Ito is with Department of Systems Design and Informatics, Kyushu Institute of Technology, 680-4 Kawazu, Iizuka, Fukuoka 820-8502, Japan 
(\email{hiroshi@ces.kyutech.ac.jp}).
}
}
\newtheorem{Satz}{Theorem} 
\newtheorem{Def}{Definition} 
\newtheorem{remark}[Satz]{Remark}
\newtheorem{Ass}{Assumption}
\newcommand \N   {\mathbb{N}}
\newcommand \R   {\mathbb{R}}
\newcommand \K   {\mathcal{K}}
\newcommand \Kinf{\mathcal{K_\infty}}
\newcommand \PD  {\mathcal{P}}
\newcommand \KL  {\mathcal{KL}}
\newcommand \LL  {\mathcal{L}}
\begin{document}
\maketitle

\begin{abstract}
This paper is devoted to two issues. One is to provide 
Lyapunov-based tools to establish integral input-to-state stability (iISS) and input-to-state stability (ISS) for some classes of nonlinear parabolic equations. 
The other is to provide a stability criterion for interconnections 
of iISS parabolic systems. 
The results addressing the former problem allow us to overcome obstacles 
arising in tackling the latter one. 
The results for the latter problem are a small-gain condition and a formula of 
Lyapunov functions which can be constructed for 
interconnections whenever the small-gain condition holds. 
It is demonstrated that for interconnections of partial differential equations, the choice of a right state and input spaces is crucial, in particular for iISS subsystems which are not ISS. 
As illustrative examples, 
stability of two highly nonlinear reaction-diffusion systems is established 
by the the proposed small-gain criterion.
\end{abstract}

\begin{keywords}
nonlinear control systems, infinite-dimensional systems, integral input-to-state stability, Lyapunov methods 
\end{keywords}

\begin{AMS}
93C20, 93C25, 37C75, 93D30, 93C10.
\end{AMS}

\pagestyle{myheadings}
\thispagestyle{plain}
\markboth{iISS OF INTERCONNECTED PARABOLIC SYSTEMS}{iISS OF INTERCONNECTED PARABOLIC SYSTEMS}

\section{Introduction}


During more than two decades, input-to-state stability (ISS) has been 
used widely in the study of robust stabilizability \cite{FrK08}, 
detectability \cite{SoW97,KSW01}
and other branches of nonlinear control theory \cite{KoA01,Son08}. 
ISS unified into one framework two different types of stable behavior: 
asymptotic stability and input-output stability \cite{Son08}, and allowed us to use the small-gain argument to study ISS of interconnected systems \cite{JTP94,JMW96}, which is sometimes referred to as the ISS small-gain theorem. 
In spite of these advantages, for many practical systems ISS is still far too restrictive.
This is because ISS excludes systems whose state stays bounded as long as 
the magnitude of the applied inputs remains below a specific threshold, but becomes unbounded when the input magnitude exceeds the threshold. Such behavior is frequently caused by saturation and limitations in actuation and processing rate. 
The idea of integral input-to-state stability (iISS) is 
to capture those nonlinearities \cite{Son98,ASW00}. 
Serious obstacles were encountered in addressing interconnections of iISS systems \cite{ITOnolcos13}. In contrast to 
ISS subsystems, iISS subsystems which 
are not ISS usher the issue of incompatibility of spaces 
in time domain for trajectory-based approaches, as well as 
insufficiency of max-type Lyapunov functions popular in 
ISS Lyapunov-based approaches (e.g. \cite{JMW96, DRW10}). 
Breakthroughs made in \cite{ITOTAC06,ItJ09,ANGSGiISS,KaJ12} 
allowed us to use small-gain criteria 
as in the ISS small-gain theorem 
in spite of the inevitable and considerable difference between 
their proofs and Lyapunov constructions.  

In contrast to the extensive literature on ISS and iISS of ordinary 
differential equations, for partial differential equations (PDEs) these theories are making 
their first steps. 
In \cite{JLR08}, \cite{DaM13}, \cite{DaM13b}, \cite{Log13}, ISS of infinite-dimensional systems
\begin{equation}
\label{InfiniteDim}
\dot{x}(t)=Ax(t)+f(x(t),u(t)), \ x(t) \in X, u(t) \in U 
\end{equation}
has been studied via methods of semigroup theory \cite{JaZ12}, \cite{CaH98}. 
Here the state space $X$ and the space of input values $U$ are Banach spaces, $A:D(A) \to X$ is the generator of a $C_0$-semigroup over $X$ and $f:X \times U \to X$ is Lipschitz with respect to  the first argument. Many classes of evolution PDEs, such as parabolic and hyperbolic equations are of this kind \cite{Hen81}, \cite{CaH98}.
As in the case of finite-dimensional systems \cite{Son08}, 
the notion of an ISS Lyapunov function can be defined 
for \eqref{InfiniteDim} so that the 
existence of an ISS Lyapunov function is sufficient for ISS of 
\eqref{InfiniteDim} (see \cite{DaM13}). 
This motivated the results in \cite{DaM13} 
on constructions of ISS Lyapunov functions for a class of parabolic 
systems belonging to \eqref{InfiniteDim}. 
More direct approach to the  construction of Lyapunov functions for some classes of 
nonlinear parabolic and linear time-varying hyperbolic systems has been proposed in \cite{MaP11, PrM12}. 
In \cite{JLR08} and \cite{Log13}, systems \eqref{InfiniteDim} with a linear function $f$ have been investigated via frequency-domain methods. %

To the best of the authors' knowledge, with some exceptions of time-delay systems, the study of Lyapunov functions for iISS of 
infinite-dimensional systems has begun in \cite{MiI14b}, 
where iISS of bilinear distributed parameter systems was 
investigated.
It was shown that bilinear systems in the form of \eqref{InfiniteDim} which are uniformly globally 
asymptotically stable without inputs are always iISS. 
The second result in \cite{MiI14b} is an extension to bilinear systems over Hilbert spaces 
of a method for construction of iISS Lyapunov functions for bilinear ODE systems, introduced by Sontag \cite{Son98}. In this paper we use similar method in Section~\ref{sec:parablic} to construct iISS Lyapunov functions for some classes of nonlinear parabolic systems.

In \cite{DaM13,DaM13b}, ISS of large scale systems whose subsystems 
are in the form of \eqref{InfiniteDim} has been studied and the ISS small gain theorem, already available for finite-dimensional systems (see \cite{JMW96,DRW10})
has been extended to the infinite-dimensional systems. 
 However, the method does not accommodate iISS subsystems 
which are not ISS. 

This paper studies stability of interconnections of two parabolic systems, each of which is of the form
\begin{equation}
\label{Nonlinear_Parabolic_General}
\frac{\partial x}{\partial t} = c \frac{\partial^2 x}{\partial l^2} + f(x(l,t),\tfrac{\partial x}{\partial l}(l,t),u(l,t)),\quad  \forall  t>0,
\end{equation}
where $l \in (0,L)$, $x(l,t) \in \R$. This class of systems \eqref{Nonlinear_Parabolic_General} allows more general functions $f$ than the class considered in \cite{DaM13,MiI14b}, and possesses systems, which are not ISS. 
The primary goal of this paper is to accomplish 
an iISS small gain theorem \cite{ItJ09,ITOnolcos13}, 
originally proved for finite-dimensional systems, 
in the infinite-dimensional setting. 
In contrast to the small-gain theorem from \cite{DaM13}, we require ISS property only from one subsystem and not from both of them; the other subsystem may be only iISS. 

Interestingly, 
this extension is much more involved than it seems on the first glance. 
When working with PDEs which are not ISS, an obstacle is not only in a higher complexity in dealing with 
Lyapunov functions in infinite dimensions, 
but also in the necessity to choose the state space in a right way. In particular, it is quite hard to find an iISS parabolic system whose state and input spaces are both $L_p$-spaces, while we do not encounter such difficulties considering ISS systems. To address this issue, this paper reexamines tools 
developed in \cite{DaM13,MiI14b} for constructing iISS and ISS 
Lyapunov functions for some classes of nonlinear parabolic systems, and actively exploits Sobolev spaces as state spaces.
For interconnections of PDE systems additional difficulties arise since we need not only choose right spaces for every subsystem, but also match them with the state and input spaces for another subsystems. Last but not least, 
incompatibility of spaces in the time domain, which is crucial for interconnections of ODE systems, is as important for PDE systems. These issues make an investigation of interconnections of iISS infinite-dimensional systems a challenging problem, which we solve here for some classes of parabolic systems in one-dimensional spatial domain. 


The rest of this paper is organized as follows. 
Having defined the stability notions in 
Section~\ref{Problem_Formulation}, a construction of an ISS 
Lyapunov function for nonlinear parabolic systems 
\eqref{Nonlinear_Parabolic_General} 
with Sobolev state space is proposed  in Section~\ref{sec:ISS_Parabolic}, 
in which the role of the input space is also highlighted. 
In Section~\ref{sec:parablic} we consider another class of nonlinear parabolic 
systems, and provide a construction of an iISS Lyapunov function with $L_p$ state space. 
In Section~\ref{sec:parablicsobolev}, a construction of an 
iISS Lyapunov function for \eqref{Nonlinear_Parabolic_General} 
is developed with Sobolev state space in order to 
go beyond systems dealt in Sections~\ref{sec:ISS_Parabolic}, \ref{sec:parablic}. 
Next in Section~\ref{GekoppelteISS_Systeme} we state an 
iISS small-gain theorem which is a sufficient condition for iISS 
of the interconnection of two iISS distributed parameter systems, and an iISS Lyapunov function is constructed explicitly for the interconnection. 
As it was proved to be necessary for finite-dimensional systems \cite{ItJ09}, 
one subsystem is required to be ISS if the other subsystem is iISS and does not admit an ISS Lyapunov function. Finally, in Section~\ref{sec:Example} we use all of the obtained results to prove stability of two types of highly nonlinear parabolic systems. The examples illustrate how fruitful the use of Sobolev spaces is in dealing with interconnections involving iISS systems. Finally, conclusions are drawn in Section~\ref{sec:conc}.

\subsection*{Notation}
\label{sec:Notation}


%
We define $\R_+:=[0,\infty)$, and 
the symbol $\N$ denotes the set of natural numbers. 
By $C(\R_+,Y)$ we denote the space of continuous functions from $\R_+$ to $Y$, equipped with the standard $\sup$-norm.
We exploit throughout the paper the following function spaces: 
\begin{itemize}[leftmargin=8mm]
  \item $C_0^k(0,L)$ is a space of $k$ times continuously differentiable functions 
	$f:(0,L) \to \R$ with a support, compact in $(0,L)$.
	\item $L_p(0,L)$, $p \geq 1$ is a space of $p$-th power integrable functions $f:(0,L) \to \R$ with the norm $\|f\|_{L_p(0,L)}=\left( \int_0^L{|f(l)|^p dl} \right)^{\frac{1}{p}}$.
	\item $W^{k,p}(0,L)$ is a Sobolev space of functions $f \in L_p(0,L)$, which have weak derivatives of order $\leq k$, all of which belong to $L_p(0,L)$.
	Norm in $W^{k,p}(0,L)$ is defined by $\|f\|_{W^{k,p}(0,L)}=\Big( \int_0^L{\sum_{1 \leq s \leq k}\left|\frac{\partial^s f}{\partial l^s}(l)\right|^p dl} \Big)^{\frac{1}{p}}$.
	\item $W^{k,p}_0(0,L)$ is a closure of $C_0^k(0,L)$ in the norm of $W^{k,p}(0,L)$. We endow $W^{k,p}_0(0,L)$ with a norm $\|f\|_{W^{k,p}_0(0,L)}=\Big( \int_0^L{\left|\frac{\partial^k f}{\partial l^k}(l)\right|^p dl} \Big)^{\frac{1}{p}}$,
equivalent to the norm  $\|~\cdot~\|_{W^{k,p}(0,L)}$ on $W^{k,p}_0(0,L)$, see, \cite[p.8]{Hen81}.
	\item $H^k(0,L)=W^{k,2}(0,L)$,  $H^k_0(0,L)=W^{k,2}_0(0,L)$.
\end{itemize}


To define and analyze stability properties we use so-called comparison functions
\begin{equation*}
\begin{array}{ll}
{\PD} &:= \left\{\gamma:\R_+\rightarrow\R_+\left|\ \gamma\mbox{ is continuous, }  \gamma(0)=0 \mbox{ and } \gamma(r)>0 \mbox{ for } r>0 \right. \right\} \\
{\K} &:= \left\{\gamma \in \PD \left|\ \gamma \mbox{ is strictly increasing}  \right. \right\}\\
{\K_{\infty}}&:=\left\{\gamma\in\K\left|\ \gamma\mbox{ is unbounded}\right.\right\}\\
{\LL}&:=\{\gamma:\R_+\rightarrow\R_+\left|\ \gamma\mbox{ is continuous and strictly decreasing with } \lim\limits_{t\rightarrow\infty}\gamma(t)=0\right. \}\\
{\KL} &:= \left\{\beta:\R_+\times\R_+\rightarrow\R_+\left|\ \beta \mbox{ is continuous, } \beta(\cdot,t)\in{\K},\ \forall t \geq 0, \  \beta(r,\cdot)\in {\LL},\ \forall r > 0\right.\right\} \\
\end{array}
\end{equation*}


\begin{table}[t]
\caption{Some useful spaces for 
interconnections allowing for {\rm i}ISS subsystems.}
\label{table:iISS}
\begin{center}
\vspace{-2ex}
\begin{tabular}{c}
(a) Choice \#1
\\[.2ex]
\begin{tabular}{lll}
\hline
& State values $X_i$ & Input values $U_i$ \\
\hline
iISS subsystem $(i=1)$ & $L_2(0,L)$   & $H_0^1(0,L)$ 
\\ 
ISS subsystem  $(i=2)$ & $H_0^1(0,L)$ & $L_2(0,L)$ 
\\
\hline
\end{tabular}
\\[4ex]
(b) Choice \#2
\\[.2ex]
\begin{tabular}{lll}
\hline
& State values $X_i$ & Input values $U_i$ \\
\hline
iISS subsystem $(i=1)$ & $H_0^1(0,L)$ & $H_0^1(0,L)$ 
\\ 
ISS subsystem  $(i=2)$ & $H_0^1(0,L)$ & $H_0^1(0,L)$ 
\\
\hline
\end{tabular}
\end{tabular}
\end{center}
\end{table}

\begin{table}[t]
\caption{A typical choice of spaces for interconnections of ISS subsystems.}
\label{table:ISS}
\begin{center}
\vspace{-2ex}
\begin{tabular}{lll}
\hline
& State values $X_i$ & Input values $U_i$ \\ \hline
ISS subsystem $(i=1)$ & $L_p(0,L)$ & $L_q(0,L)$ \\ 
ISS subsystem $(i=2)$ & $L_q(0,L)$ & $L_p(0,L)$ \\
\hline
\end{tabular}
\end{center}
\end{table}


\section{Problem formulation}
\label{Problem_Formulation}

Consider the system \eqref{InfiniteDim} and assume throughout the paper that $X$ and $U$ are Banach spaces and $f(0,0)=0$, i.e., 
$x=0\in X$ is an equilibrium point of \eqref{InfiniteDim}. Let also $T$ be a semigroup generated by $A$ from \eqref{InfiniteDim}.

Under (weak) solutions of \eqref{InfiniteDim} we understand solutions of the integral equation
\begin{align}
\label{InfiniteDim_Integral_Form}
x(t)=T(t)x(0) + \int_0^t \hspace{-1ex}T(t-s)f(x(s),u(s))ds,
\end{align}
where $\forall t\in[0,\tau]$, belonging to $C([0,\tau],X)$ for all $\tau>0$.

\begin{Def}
We call $f:X \times U \to X$ Lipschitz continuous on bounded subsets of $X$, uniformly with respect to  the second argument if $\forall C>0 \; \exists K(C)>0$, such that $\forall x,y: \|x\|_X \leq C,\ \|y\|_X \leq C$, $\forall v \in U$, 
\begin{eqnarray}
\|f(y,v)-f(x,v)\|_X \leq K(C) \|y-x\|_X.
\label{eq:Lipschitz}
\end{eqnarray}
\end{Def}

We will use the following assumption concerning nonlinearity $f$ throughout the paper
\begin{Ass}
\label{Assumption1}
$f:X \times U \to X$ is Lipschitz continuous on bounded subsets of $X$, uniformly with respect to  the second argument and $f(x,\cdot)$ is continuous for all $x \in X$.
\end{Ass}

Assumption~\ref{Assumption1} ensures that the weak solution of \eqref{InfiniteDim} exists and is unique, according to a variation of a classical existence and uniqueness theorem \cite[Proposition 4.3.3]{CaH98}.
Let $\phi(t,\phi_0,u)$ denote the state of a system 
\eqref{InfiniteDim}, i.e. the solution to \eqref{InfiniteDim}, 
at moment $t\in\R_+$ associated with 
an initial condition $\phi_0\in X$ at $t=0$, and input $u\in U_c$, 
where $U_c$ is a linear normed space of admissible 
functions mapping $\R_+$ into $U$, equipped 
with a norm $\|\cdot\|_{U_c}$. 

Next we introduce stability properties for the system \eqref{InfiniteDim}.
\begin{Def}
System \eqref{InfiniteDim} is {\it globally asymptotically
stable at zero uniformly with respect to state} (0-UGASs), if $\exists \beta \in \KL$, such that $\forall \phi_0 \in X$, $\forall t\geq 0$ it holds
\begin{equation}
\label{UniStabAbschaetzung}
\left\| \phi(t,\phi_0,0) \right\|_{X} \leq  \beta(\left\| \phi_0 \right\|_{X},t) .
\end{equation}
%
\end{Def}

%


To study stability properties of \eqref{InfiniteDim} with respect to external inputs, we use the notion of input-to-state stability \cite{DaM13}:
\begin{Def}
\label{Def:ISS}
System \eqref{InfiniteDim} is called {\it input-to-state stable
(ISS) with respect to  space of inputs $U_c$}, if there exist $\beta \in \KL$ and $\gamma \in \K$ 
such that the inequality
\begin {equation}
\label{iss_sum}
\begin {array} {lll}
\| \phi(t,\phi_0,u) \|_{X} \leq \beta(\| \phi_0 \|_{X},t) + \gamma( \|u\|_{U_c})
\end {array}
\end{equation}
holds $\forall \phi_0 \in X$, $\forall u\in U_c$ and $\forall t\geq 0$.

We emphasize that the above definition does not yet exactly 
correspond to ISS of finite dimensional systems \cite{Son08} since 
Definition \ref{Def:ISS} allows the flexibility of the choice of $U_c$. 
A \textit{system \eqref{InfiniteDim} is called ISS}, without expressing the 
normed space of inputs explicitly, if it is ISS with respect to  $U_c=C(\R_+,U)$ 
endowed with a usual supremum norm. 
This terminology follows that of ISS for finite dimensional systems. 

\end{Def}


If the system is not ISS, it may still have some sort of robustness. Thus we introduce another stability property 
\begin{Def}
\label{Def:iISS}
System \eqref{InfiniteDim} is called {\it integral input-to-state stable (iISS)} if there exist $\alpha \in \Kinf$, $\mu \in \K$ and $\beta \in \KL$ 
such that the inequality 
\begin{equation}
\label{iISS_Estimate}
\alpha(\|\phi(t,\phi_0,u)\|_X) \leq \beta(\|\phi_0\|_X,t) + \int_0^t \mu(\|u(s)\|_U)ds
\end{equation}
holds $\forall \phi_0 \in X$, $\forall u\in U_c=C(\R_+,U)$ and $\forall t\geq 0$.
\end{Def}

The following defines a useful notion for studying iISS. 

\begin{Def}\label{Def:iISSV}
A continuous function $V:X \to \R_+$ is called an \textit{iISS Lyapunov function},  if there exist
$\psi_1,\psi_2 \in \Kinf$, $\alpha \in \PD$ and $\sigma \in \K$ 
such that 
\begin{equation}
\label{LyapFunk_1Eig}
\psi_1(\|x\|_X) \leq V(x) \leq \psi_2(\|x\|_X), \quad \forall x \in X
\end{equation}
and system \eqref{InfiniteDim} satisfies 
\begin{equation}
\label{DissipationIneq}
\dot{V}_u(x) \leq -\alpha(\|x\|_X) + \sigma(\|u(0)\|_U)
\end{equation}
for all $x \in X$ and $u\in U_c$, 
where the Lie derivative of $V$ corresponding to the input $u$ is defined by
\begin{equation}
\label{LyapAbleitung}
\dot{V}_u(x)=\mathop{\overline{\lim}} \limits_{t \rightarrow +0} {\frac{1}{t}(V(\phi(t,x,u))-V(x)) }.
\end{equation}
Furthermore, if 
\begin{equation}
\label{eq:ISSalphsig}
\liminf_{\tau\to\infty}\alpha(\tau)=\infty 
\,\mbox{ or }\, 
\liminf_{\tau\to\infty}\alpha(\tau)\geq
\lim_{\tau\to\infty}\sigma(\tau)
\end{equation}
holds, system $V$ is called an \textit{ISS Lyapunov function}. 
\end{Def}

\begin{proposition}[Proposition 1, \cite{MiI14b}]
\label{PropSufiISS}
 If there exist an iISS (resp. ISS) Lyapunov function for \eqref{InfiniteDim}, then \eqref{InfiniteDim} is iISS (resp. ISS).
\end{proposition}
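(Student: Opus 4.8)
The plan is to convert the pointwise dissipation inequality \eqref{DissipationIneq}, which is a statement about the Lie derivative \eqref{LyapAbleitung} at a single point, into a differential inequality for the scalar function $t \mapsto V(\phi(t,\phi_0,u))$ along trajectories, and then to reduce the whole claim to a one-dimensional comparison system, for which the iISS (resp.\ ISS) estimate is classical. First I would fix $\phi_0 \in X$ and $u \in U_c = C(\R_+,U)$ and set $y(t) := V(\phi(t,\phi_0,u))$; since $\phi(\cdot,\phi_0,u) \in C([0,\tau],X)$ and $V$ is continuous, $y$ is continuous. The cocycle identity $\phi(t+h,\phi_0,u) = \phi(h,\phi(t,\phi_0,u),u(t+\cdot))$, which holds by the uniqueness of weak solutions guaranteed under Assumption~\ref{Assumption1}, together with the shift-invariance of $U_c$, lets me identify the upper right Dini derivative of $y$ with the Lie derivative at the current state and apply \eqref{DissipationIneq} there:
\begin{equation*}
D^+ y(t) = \dot V_{u(t+\cdot)}(\phi(t,\phi_0,u)) \leq -\alpha(\|\phi(t,\phi_0,u)\|_X) + \sigma(\|u(t)\|_U),
\end{equation*}
where the value of the shifted input $u(t+\cdot)$ at $0$ is $u(t)$.

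Next, to obtain a self-contained scalar inequality I would absorb $\alpha(\|\cdot\|_X)$ into a positive definite function of $y$. Using both bounds in \eqref{LyapFunk_1Eig}, every $x$ with $V(x)=v$ satisfies $\|x\|_X \in [\psi_2^{-1}(v),\psi_1^{-1}(v)]$, a compact interval bounded away from $0$ for $v>0$; hence $\hat\alpha(v) := \min\{\alpha(r):\psi_2^{-1}(v) \le r \le \psi_1^{-1}(v)\}$ defines a function $\hat\alpha \in \PD$ with $\alpha(\|x\|_X) \ge \hat\alpha(V(x))$, whence $D^+ y(t) \le -\hat\alpha(y(t)) + \sigma(\|u(t)\|_U)$. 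Since $y$ and the right-hand side are continuous in $t$, a standard comparison lemma for Dini derivatives gives $y(t) \le \eta(t)$, where $\eta$ solves $\dot\eta = -\hat\alpha(\eta) + \sigma(\|u(t)\|_U)$, $\eta(0) = V(\phi_0)$. From here the argument is purely one-dimensional and reproduces the classical finite-dimensional Lyapunov characterization of iISS/ISS: scalar dissipative systems of this form admit $\beta_1 \in \KL$ and $\nu \in \K$ with $\eta(t) \le \beta_1(V(\phi_0),t) + \int_0^t \nu(\sigma(\|u(s)\|_U))\,ds$, and are in fact ISS precisely under a condition of type \eqref{eq:ISSalphsig}. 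Combining with $\psi_1(\|\phi(t,\phi_0,u)\|_X) \le y(t) \le \eta(t)$ and $V(\phi_0) \le \psi_2(\|\phi_0\|_X)$ yields \eqref{iISS_Estimate} with $\alpha := \psi_1$, $\beta(r,t) := \beta_1(\psi_2(r),t) \in \KL$ and $\mu := \nu\circ\sigma \in \K$; under \eqref{eq:ISSalphsig} the same reduction delivers \eqref{iss_sum}.

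I expect the main obstacle to lie in the passage from the pointwise Lie-derivative bound to the trajectory-wise differential inequality, rather than in the comparison step. Two points require care in the infinite-dimensional setting: justifying the cocycle identity and the identification $D^+ y(t) = \dot V_{u(t+\cdot)}(\phi(t,\phi_0,u))$ at the level of weak solutions, and integrating a Dini-derivative inequality when $V$ is merely continuous and $\phi$ is only a mild solution, so that $y$ is not known a priori to be differentiable or absolutely continuous. The remaining delicate but standard point is the reduction of the merely positive definite $\alpha \in \PD$ to the scalar rate $\hat\alpha$; it is exactly the possible non-coercivity of $\alpha$ that obstructs an ISS estimate in general and forces the integral form \eqref{iISS_Estimate}, which is why \eqref{eq:ISSalphsig} is needed for the ISS conclusion.
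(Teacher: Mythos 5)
Your proof is correct and follows essentially the same route as the paper's source for this result: the paper itself gives no proof but imports the statement from \cite{MiI14b}, where the argument is exactly the comparison-based one you outline (cocycle identity to turn \eqref{DissipationIneq} into a Dini-derivative inequality along trajectories, the two-sided bounds \eqref{LyapFunk_1Eig} to replace $\alpha(\|x\|_X)$ by a positive definite function of $V$, then the classical Angeli--Sontag--Wang scalar comparison lemma to produce the iISS, resp.\ ISS, estimate). Your construction $\hat\alpha(v)=\min\{\alpha(r):\psi_2^{-1}(v)\le r\le\psi_1^{-1}(v)\}$ correctly handles the fact that $\alpha\in\PD$ need not be monotone, which is precisely the delicate point of that argument (and \eqref{eq:ISSalphsig} indeed transfers from $\alpha$ to $\hat\alpha$ since the minimization interval escapes to infinity as $v\to\infty$).
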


As a rule a construction of a Lyapunov function is the only realistic way to prove ISS/iISS of control systems.
This makes the construction of an ISS/iISS Lyapunov functions a fundamental problem in stability theory. 
In the next sections we propose a method for constructing iISS and ISS Lyapunov functions for certain equations \eqref{Nonlinear_Parabolic_General}. Then we show how to construct the Lyapunov functions for systems of PDEs from the information about Lyapunov functions of subsystems by means of an small-gain approach.

In this paper, for simplicity we  write $\dot{V}$ instead of 
$\dot{V}_u(x)$ when solutions along which the derivative 
is taken are clear from the context. 

\begin{remark}
For finite-dimensional systems iISS notion is strictly weaker than ISS, in sense that all ISS systems are iISS \cite{ASW00}. 
In addition, there are iISS systems which are not 
ISS \cite{Son98}. This strict inclusive relationship has not yet 
been proved completely for PDEs of the form 
\eqref{Nonlinear_Parabolic_General}. However, in terms of Lyapunov functions defined above, ISS Lyapunov functions establishing ISS are 
always iISS Lyapunov functions establishing iISS. Furthermore, iISS PDEs of the form  \eqref{Nonlinear_Parabolic_General} are not necessarily ISS, see \cite{MiI14b}. 
\end{remark}

\section{ISS Lyapunov functions for a class of nonlinear parabolic systems: Sobolev state space}
\label{sec:ISS_Parabolic}

The purpose of this section is to develop a Lyapunov-type 
characterization of ISS for PDEs in 
\eqref{Nonlinear_Parabolic_General}.
There is a number of papers, where such characterizations for parabolic systems whose state space is an $L_p$ space have been provided \cite{MaP11,DaM13}. However, as we will see in Section~\ref{sec:Example} the iISS systems in many cases cannot have the $L_p$ space both as an input and state space. Since our final goal is to consider interconnections of iISS and ISS systems, we need to have the constructions of ISS Lyapunov functions with Sobolev state spaces. This section provides one of such constructions.

Consider a system 
\begin{equation}
\label{Nonlinear_Parabolic_W_12q}
\frac{\partial x}{\partial t} = c \frac{\partial^2 x}{\partial l^2} + f\big(x(l,t),\tfrac{\partial x}{\partial l}(l,t)\big)+ u(l,t),\quad  \forall  t>0
\end{equation}
defined on the spatial domain $(0,L)$ with the Dirichlet boundary conditions
\begin{eqnarray}
x(0,t) = x(L,t) =0, \quad \forall t \geq 0 .
\label{eq:BoundaryConditions_W_12q}
\end{eqnarray}
The next theorem gives a sufficient condition for ISS 
of \eqref{Nonlinear_Parabolic_W_12q} with respect to the state space $X=W^{1,2q}_0(0,L)$, $q \in \N$ and two types of spaces $U$ of 
input values by construction of a Lyapunov function.

\begin{Satz}\label{theorem:parabolic_ISS_W_12q_norm}
Suppose 
\begin{eqnarray}
\int_0^L \Big( \frac{\partial x}{\partial l} \Big)^{2q-2} \frac{\partial^2 x}{\partial l^2}f\left(x,\tfrac{\partial x}{\partial l}\right) dl \geq 
\int_0^L \eta\left(\left(\tfrac{\partial x}{\partial l}\right)^{2q}\right) dl
\label{eq:Assumption_on_f}
\end{eqnarray}
holds all $x \in X$ with 
some convex continuous function $\eta: \R_+\to \R$ 
and some $\epsilon\in\R_+$ such that 
\begin{eqnarray}
\hat{\alpha}(s):=
\frac{\pi^2}{q^2L^2}(c-\epsilon)s+L\eta\left(\frac{s}{L}\right)
\geq 0, \ \forall s\in\R_+ . 
\label{eq:Assumption_on_a}
\end{eqnarray}
Then the following statements hold: 
\begin{enumerate}
\item\label{item:ISSLFitem1} 
If $\epsilon>0$, then the function 
\begin{align}
V(x)=\int_0^L \Big( \frac{\partial x}{\partial l} \Big)^{2q} dl = \|x\|_{W^{1,2q}_0(0,L)}^{2q} 
\label{eq:LF_W_12q_norm}
\end{align}
is an ISS Lyapunov function of 
\eqref{Nonlinear_Parabolic_W_12q}-\eqref{eq:BoundaryConditions_W_12q} with respect to the space $U=L_{2q}(0,L)$ of input values 
and $U=W_0^{1,2q}(0,L)\cap W^{2,2q}(0,L)$ as well. 
\item\label{item:ISSLFitem2} 
If there exists $g \in \Kinf$ so that
\begin{align}
Lsg(s)=\hat{\alpha}
\left(Ls^{\tfrac{2q}{2q-1}}\right)
, \quad \forall s\in\R_+
\label{eq:Assumption_on_alpha}
\end{align}
holds, then 
the function $V$ given in \eqref{eq:LF_W_12q_norm} 
is an ISS Lyapunov function of 
\eqref{Nonlinear_Parabolic_W_12q}-\eqref{eq:BoundaryConditions_W_12q} with respect to the space of input values $U=U_g$, consisting of $u \in L_1(0,L)$: $u(0)=u(L)=0$, $\frac{\partial u}{\partial l}$ exists and 
$\int_0^L \left| \frac{\partial u}{\partial l}\right|
g^{-1}\left(\left| \frac{\partial u}{\partial l}\right|\right) dl$ 
is finite.
\end{enumerate}
\end{Satz}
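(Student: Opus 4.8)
The plan is to verify that the functional $V$ in \eqref{eq:LF_W_12q_norm} is an ISS Lyapunov function in the sense of \eqref{LyapFunk_1Eig}--\eqref{eq:ISSalphsig} and then to conclude by Proposition~\ref{PropSufiISS}. The sandwich bound \eqref{LyapFunk_1Eig} is immediate with $\psi_1(r)=\psi_2(r)=r^{2q}\in\Kinf$, since $V(x)=\|x\|_X^{2q}$ by the very definition of the $W^{1,2q}_0(0,L)$-norm. Everything therefore reduces to the dissipation inequality \eqref{DissipationIneq}. Writing $x_l,x_{ll}$ for the first and second spatial derivatives, I would compute $\dot V$ along solutions as $2q\int_0^L (x_l)^{2q-1}\partial_l\partial_t x\,dl$ and integrate by parts in $l$; the boundary contributions vanish because \eqref{eq:BoundaryConditions_W_12q} forces $\partial_t x=0$ at $l\in\{0,L\}$. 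Substituting \eqref{Nonlinear_Parabolic_W_12q} then splits $\dot V=T_1+T_2+T_3$ into a diffusion part $T_1=-2q(2q-1)c\int_0^L (x_l)^{2q-2}(x_{ll})^2\,dl$, a nonlinearity part $T_2=-2q(2q-1)\int_0^L (x_l)^{2q-2}x_{ll}\,f\,dl$, and an input part $T_3=-2q(2q-1)\int_0^L (x_l)^{2q-2}x_{ll}\,u\,dl$.

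Two estimates are common to both parts. Hypothesis \eqref{eq:Assumption_on_f} gives $T_2\le-2q(2q-1)\int_0^L \eta((x_l)^{2q})\,dl$, and for $T_1$ I would substitute $w:=(x_l)^q$, so that $\int_0^L (x_l)^{2q-2}(x_{ll})^2\,dl=q^{-2}\int_0^L (w_l)^2\,dl$ and $\int_0^L w^2\,dl=V$, and invoke a Wirtinger/Poincar\'e inequality $\int_0^L (w_l)^2\,dl\ge (\pi^2/L^2)\int_0^L w^2\,dl$; this is exactly the origin of the factor $\pi^2/(q^2L^2)$ in \eqref{eq:Assumption_on_a}. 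Splitting $c=(c-\epsilon)+\epsilon$ and applying the Wirtinger bound only to the $(c-\epsilon)$-portion yields $T_1+T_2\le-2q(2q-1)B(x)-2q(2q-1)\epsilon\int_0^L (x_l)^{2q-2}(x_{ll})^2\,dl$, where $B(x)=\frac{\pi^2}{q^2L^2}(c-\epsilon)V+\int_0^L \eta((x_l)^{2q})\,dl$. By convexity of $\eta$ and Jensen's inequality $\int_0^L \eta((x_l)^{2q})\,dl\ge L\eta(V/L)$, so $B(x)\ge\hat\alpha(V)\ge0$ by \eqref{eq:Assumption_on_a}, while the last term is held in reserve.

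For part~\ref{item:ISSLFitem1}, with $\epsilon>0$, I would leave $T_3$ in its original form and absorb it into the reserve. Writing $(x_l)^{2q-2}x_{ll}u=[(x_l)^{q-1}x_{ll}]\,[(x_l)^{q-1}u]$ and applying Young's inequality transfers a controllable fraction of $\epsilon\int_0^L (x_l)^{2q-2}(x_{ll})^2\,dl$ to dominate the $x_{ll}$-factor, after which the still-remaining reserve bounds a strictly negative $-c_1V$ via Wirtinger. For $U=L_{2q}(0,L)$ the leftover integral is estimated by H\"older as $\int_0^L (x_l)^{2q-2}u^2\,dl\le V^{(q-1)/q}\|u\|_{L_{2q}}^2$ and split by a second Young step into $\delta V+\sigma(\|u\|_{L_{2q}})$, with $\delta$ small enough to preserve negativity; for $U=W^{1,2q}_0\cap W^{2,2q}$ the one-dimensional embedding $W^{1,2q}_0(0,L)\hookrightarrow L_\infty(0,L)$ lets me factor out $\|u\|_\infty$ and argue identically. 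In both cases $\dot V\le-c_1\|x\|_X^{2q}+\sigma(\|u\|_U)$, and since the decay rate is of class $\Kinf$ the first alternative of \eqref{eq:ISSalphsig} holds, so $V$ is an ISS Lyapunov function.

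For part~\ref{item:ISSLFitem2} the input is handled differently: every $u\in U_g$ satisfies $u(0)=u(L)=0$, so integrating $T_3$ by parts (boundary terms vanish again) gives $T_3=2q\int_0^L (x_l)^{2q-1}u_l\,dl$. The linchpin is the relation \eqref{eq:Assumption_on_alpha} evaluated at $s=|x_l|^{2q-1}$, which reads $|x_l|^{2q-1}g(|x_l|^{2q-1})=\frac{\pi^2}{q^2L^2}(c-\epsilon)|x_l|^{2q}+\eta(|x_l|^{2q})$ and shows that $B(x)=\int_0^L |x_l|^{2q-1}g(|x_l|^{2q-1})\,dl$. Bounding the input by the Young inequality $ab\le a\,g(a)+b\,g^{-1}(b)$ with $a=|x_l|^{2q-1}$ and $b=|u_l|$ reproduces the very same density with coefficient $+2q$, against the dissipation's $-2q(2q-1)$; they combine into the nonpositive $-4q(q-1)\int_0^L |x_l|^{2q-1}g(|x_l|^{2q-1})\,dl$, while the input leaves $2q\int_0^L |u_l|\,g^{-1}(|u_l|)\,dl$, finite by the definition of $U_g$. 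Recognizing through Jensen that this combined integral is $\ge\hat\alpha(V)$ and keeping the reserved $\epsilon$-diffusion for strict negativity then yields \eqref{DissipationIneq} and, via \eqref{eq:ISSalphsig}, the ISS property. The main obstacle throughout is not the algebra but its rigorous justification for weak solutions: the formal differentiation of $V$ and the two integrations by parts presuppose enough regularity of $\phi(t,x,u)$, which I would secure by parabolic smoothing on a dense set of data and inputs and then pass to the limit in \eqref{LyapAbleitung}. A secondary delicate point is pinning down the Wirtinger constant for the composite $w=(x_l)^q$, since $x_l$ need not vanish at the endpoints, together with the verification that $u\mapsto 2q\int_0^L |u_l|\,g^{-1}(|u_l|)\,dl$ is dominated by a $\K$-function of the $U_g$-norm.
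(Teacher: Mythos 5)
Your treatment of Item~\ref{item:ISSLFitem1} is correct and follows the paper's proof almost verbatim: the same computation of $\dot V$ with integration by parts, the same use of \eqref{eq:Assumption_on_f}, the substitution $w=(\partial x/\partial l)^q$ combined with Friedrichs' inequality to produce the constant $\pi^2/(q^2L^2)$, Jensen's inequality for the $\eta$-term, and Young's inequality to absorb the input (your integrated H\"older-then-Young step is a cosmetic variant of the paper's pointwise application of \eqref{ineq:Young}). One caveat: you apply the Wirtinger bound only to the $(c-\epsilon)$-portion of the diffusion term, which silently assumes $c\geq\epsilon$ for the inequality to point the right way; the paper avoids this by applying Friedrichs to the full $c$-term and then performing the split $c=(c-\epsilon)+\epsilon$ algebraically inside the integrand (the function $\xi$ in \eqref{eq:Assumption_xi}), which is valid for every $\epsilon\geq 0$.

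The genuine gap is in Item~\ref{item:ISSLFitem2}. You invoke the unweighted inequality $ab\leq a\,g(a)+b\,g^{-1}(b)$, so the input is absorbed at the price of the \emph{full} density $\bigl|\tfrac{\partial x}{\partial l}\bigr|^{2q-1}g\bigl(\bigl|\tfrac{\partial x}{\partial l}\bigr|^{2q-1}\bigr)$ with coefficient $+2q$, against the dissipation's $-2q(2q-1)$; the surviving decay is, as you note, $-4q(q-1)\int_0^L\bigl|\tfrac{\partial x}{\partial l}\bigr|^{2q-1}g\bigl(\bigl|\tfrac{\partial x}{\partial l}\bigr|^{2q-1}\bigr)\,dl$, which is \emph{identically zero when $q=1$}. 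Your fallback --- ``keeping the reserved $\epsilon$-diffusion for strict negativity'' --- is not available here, because Item~\ref{item:ISSLFitem2} only assumes $\epsilon\in\R_+=[0,\infty)$, not $\epsilon>0$; indeed the paper's Example 1 applies Item~\ref{item:ISSLFitem2} with $q=1$ and $\epsilon=1-a$, which vanishes when $a=1$. In that case your estimate degenerates to $\dot V\leq 2\int_0^L\bigl|\tfrac{\partial u}{\partial l}\bigr|\,g^{-1}\bigl(\bigl|\tfrac{\partial u}{\partial l}\bigr|\bigr)\,dl$ with no negative term at all, and the ISS conclusion fails (your argument does go through for $q\geq 2$, where $4q(q-1)>0$). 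The repair is exactly what the paper does: use the weighted inequality \eqref{ineq:Kinf}, $ab\leq\omega a\,g(a)+b\,g^{-1}(b/\omega)$, with $\omega<2q-1$ chosen strictly, so that only the fraction $\omega/(2q-1)<1$ of the dissipation is spent and a positive multiple $\bigl(1-\tfrac{\omega}{2q-1}\bigr)\hat\alpha(V)$ survives, as in \eqref{Last_estim_altern_U}; since \eqref{eq:Assumption_on_alpha} with $g\in\Kinf$ forces $\hat\alpha\in\Kinf$, this yields the ISS dissipation inequality for every $q\geq 1$ and every $\epsilon\geq 0$.
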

\begin{proof}
Along the solution of 
\eqref{Nonlinear_Parabolic_W_12q}-\eqref{eq:BoundaryConditions_W_12q}, 
the function $V$ given as in \eqref{eq:LF_W_12q_norm} satisfies 
\begin{align*}
\dot{V} = & \; 2q \int_0^L \Big( \frac{\partial x}{\partial l} \Big)^{2q-1} \frac{\partial^2 x}{\partial l \partial t } dl \\
					 = & -2q (2q-1) \int_0^L \frac{\partial x}{\partial t} \Big( \frac{\partial x}{\partial l} \Big)^{2q-2} \frac{\partial^2 x}{\partial l^2} dl +2q \Big( \frac{\partial x}{\partial l} \Big)^{2q-1} \frac{\partial x}{\partial t} \Big|_{l=0}^L.
\end{align*}
Due to \eqref{eq:BoundaryConditions_W_12q} we have $\frac{\partial x}{\partial t} \big|_{l=0}^L=0$ and consequently 					
\begin{align*}
\dot{V} = -2q (2q-1) \int_0^L \Big( \frac{\partial x}{\partial l} \Big)^{2q-2} \frac{\partial^2 x}{\partial l^2} 
				\cdot \Big(c \frac{\partial^2 x}{\partial l^2} + f(x(l,t),\tfrac{\partial x}{\partial l}(l,t))+ u(l,t) \Big) dl.				
\end{align*}
Next we utilize \eqref{eq:Assumption_on_f} to obtain
\begin{align}
\tfrac{1}{2q (2q-1)} \dot{V} 
\leq& -c \int_0^L \Big( \frac{\partial x}{\partial l} \Big)^{2q-2} \Big(\frac{\partial^2 x}{\partial l^2}\Big)^2 dl 
-\int_0^L \eta\left(\left(\tfrac{\partial x}{\partial l}\right)^{2q}\right) dl
\nonumber\\
				    & \ 
- \int_0^L \Big( \frac{\partial x}{\partial l} \Big)^{2q-2} \frac{\partial^2 x}{\partial l^2} u  dl.
\label{BasicEstimate_For_Other_U}
\end{align}
Using Young's inequality $\frac{\partial^2 x}{\partial l^2} u \leq \frac{\omega}{2} \Big(\frac{\partial^2 x}{\partial l^2}\Big)^2 + \tfrac{1}{2\omega}u^2$, which holds for any $\omega>0$, we get
\begin{align}
\tfrac{1}{2q (2q-1)} \dot{V} \leq& \left(\frac{\omega}{2}-c\right) \int_0^L \Big( \frac{\partial x}{\partial l} \Big)^{2q-2} \Big(\frac{\partial^2 x}{\partial l^2}\Big)^2 dl 
-\int_0^L \eta\left(\left(\tfrac{\partial x}{\partial l}\right)^{2q}\right) dl
\nonumber\\
				    & \ 
+ \frac{1}{2\omega}\int_0^L \Big( \frac{\partial x}{\partial l} \Big)^{2q-2} u^2 dl.
\label{eq:W_12q_Temporary_Estimate}
\end{align}
It is easy to see that
\[
\int_0^L \Big( \frac{\partial x}{\partial l} \Big)^{2q-2} \Big(\frac{\partial^2 x}{\partial l^2}\Big)^2 dl
= \frac{1}{q^2} \int_0^L \Big( \frac{\partial}{\partial l}    \Big(\Big( \frac{\partial x}{\partial l} \Big)^{q} \Big)\Big)^2 dl.
\]
Due to Friedrichs' inequality  \eqref{ineq:Friedrichs} we proceed to
\begin{equation}
\int_0^L \Big( \frac{\partial x}{\partial l} \Big)^{2q-2} \Big(\frac{\partial^2 x}{\partial l^2}\Big)^2 dl
\geq \frac{\pi^2}{q^2L^2} \int_0^L \Big( \frac{\partial x}{\partial l} \Big)^{2q}dl  = \frac{\pi^2}{q^2L^2}V(x).
\label{eq:LinTermEstimate}
\end{equation}
Define 
\begin{eqnarray}
\xi(s):=\frac{1}{L}\hat{\alpha}(Ls) = \frac{\pi^2}{q^2L^2}(c-\epsilon)s+\eta\left(s\right)
, \quad  \forall s\in\R_+ . 
\label{eq:Assumption_xi}
\end{eqnarray}
The convexity of $\eta$ implies the convexity of $\xi$. 
Due to the definition \eqref{eq:LF_W_12q_norm} of $V$, 
Jensen's inequality \eqref{ineq:Jensen} yields 
\begin{eqnarray}
\int_0^L\xi\left(\left(\tfrac{\partial x}{\partial l}\right)^{2q}\right) dl 
\geq \hat{\alpha}(V(x)) , \quad \forall x\in\R . 
\label{eq:xihatal}
\end{eqnarray}
Now we continue the estimates of $\dot{V}$ in the 
cases of $q=1$ and $q>1$ separately.

In the case of $q=1$, inequality \eqref{eq:W_12q_Temporary_Estimate} implies
\begin{align}
\dot{V} &\leq 
2\Big(\frac{\omega}{2}-c\Big)\frac{\pi^2}{L^2}V(x) 
-2\!\int_0^L \hspace{-2ex}\eta\left(\left(\tfrac{\partial x}{\partial l}\right)^{\!2}\right)\!dl
+ \frac{1}{\omega} \|u\|^{2}_{L_{2}(0,L)}
\nonumber \\
& = 
2\Big(\frac{\omega}{2}-c\Big)\frac{\pi^2}{L^2}V(x) 
-2\!\int_0^L \hspace{-2ex}\xi\left(\left(\tfrac{\partial x}{\partial l}\right)^{\!2}\right)\!dl
\nonumber \\
& \qquad 
+2\left(c-\epsilon\right)\frac{\pi^2}{L^2} \!\int_0^L \hspace{-1ex}\left(\frac{\partial x}{\partial l}\right)^{\!2} dl
+ \frac{1}{\omega} \|u\|^{2}_{L_{2}(0,L)}
\nonumber \\
& \leq 
2\Big(\frac{\omega}{2}{-}\epsilon\Big)\frac{\pi^2}{L^2}V(x) 
-2\hat{\alpha}(V(x))
+ \frac{1}{\omega} \|u\|^{2}_{L_{2}(0,L)}.
\label{eq:W_12q_Final_q_equals_2}
\end{align}
Here, the last inequality uses \eqref{eq:xihatal}. 
Recall that $\epsilon>0$. 
Pick $\omega\in(0,2\epsilon)$. 
Property \eqref{eq:Assumption_on_a} 
ensures that $V$ is an ISS Lyapunov function of \eqref{Nonlinear_Parabolic_W_12q} with respect to input space $U=L_{2q}(0,L)$. 
Application of Friedrichs' inequality to $\|u\|^{2}_{L_{2}(0,L)}$ 
proves that $V$ is an ISS Lyapunov function
with respect to $U=W_0^{1,2}(0,L)\cap W^{2,2}(0,L)=H_0^1(0,L)\cap H^2(0,L)$.

Next, assume that $q>1$. We apply Young's inequality \eqref{ineq:Young} to the last term in \eqref{eq:W_12q_Temporary_Estimate} with $\omega_2>0$ as follows
\begin{equation*}
\Big( \frac{\partial x}{\partial l} \Big)^{2q-2} u^2 
\leq
\frac{1}{q\omega_2} u^{2q} + \omega^{\frac{1}{q-1}}_2\frac{q-1}{q} 
\Big( \frac{\partial x}{\partial l} \Big)^{2q}
\end{equation*}
Putting this expression into \eqref{eq:W_12q_Temporary_Estimate} and using \eqref{eq:Assumption_xi} we obtain finally
\begin{align}
\tfrac{1}{2q (2q-1)} \dot{V} 
&\leq \Big( \big(\frac{\omega}{2}-c\big)\frac{\pi^2}{q^2L^2} +\omega^{\frac{1}{q-1}}_2 \frac{q-1}{2\omega q}\Big)V(x) 
\nonumber \\
& \ 
-\int_0^L \hspace{-1ex}\eta\left(\left(\tfrac{\partial x}{\partial l}\right)^{2q}\right) dl
+ \frac{1}{2\omega \omega_2 q} \|u\|^{2q}_{L_{2q}(0,L)} 
\nonumber \\
&\leq \Big( \big(\frac{\omega}{2}-\epsilon\big)\frac{\pi^2}{q^2L^2} +\omega^{\frac{1}{q-1}}_2 \frac{q-1}{2\omega q}\Big)V(x) 
\nonumber \\
& \quad - \hat{\alpha}(V(x)) + \frac{1}{2\omega \omega_2 q} \|u\|^{2q}_{L_{2q}(0,L)} .
\label{eq:W_12q_Final_Estimate}
\end{align}
It is easy to see that choosing $\omega>0$ and $\omega_2>0$ small enough, we can ensure  
$(\frac{\omega}{2}-\epsilon)\frac{\pi^2}{q^2 L^2}+\omega^{\frac{1}{q-1}}_2 \frac{q-1}{2\omega q}<0$. 
Hence, due to \eqref{eq:Assumption_on_a}, 
the function $V$ is an ISS Lyapunov function of \eqref{Nonlinear_Parabolic_W_12q} with respect to input space $U=L_{2q}(0,L)$. 
Application of Friedrichs' inequality to $\|u\|^{2q}_{L_{2q}(0,L)}$ 
proves that $V$ is an ISS Lyapunov function
with respect to $U=W_0^{1,2q}(0,L)\cap W^{2,2q}(0,L)$.


Finally, to prove Item \ref{item:ISSLFitem2} of the theorem, we 
are going to estimate the last term in \eqref{BasicEstimate_For_Other_U} with the help of 
\begin{align}
- \int_0^L\hspace{-1ex}\Big( \frac{\partial x}{\partial l} \Big)^{2q-2} 
\frac{\partial^2 x}{\partial l^2} u  dl
&=
- \frac{1}{2q\!-\!1}\hspace{-.4ex}\int_0^L\hspace{-1ex}\frac{\partial }{\partial l} \Big(\Big( \frac{\partial x}{\partial l} \Big)^{2q-1}\Big) u  dl \nonumber\\
&=
\frac{1}{2q\!-\!1} \int_0^L \Big( \frac{\partial x}{\partial l} \Big)^{2q-1} \frac{\partial u}{\partial l}  dl \nonumber\\
&\leq
\frac{1}{2q\!-\!1} \int_0^L \Big| \frac{\partial x}{\partial l} \Big|^{2q-1} \Big| \frac{\partial u}{\partial l}\Big|  dl . 
\label{eq:Temp_Estimate}
\end{align}
Here, $u(0)=u(L)=0$ is used in integration by parts. By virtue of \eqref{eq:Assumption_on_alpha} it holds that $sg(s) = \xi(s^{\frac{2q}{2q-1}})$ and applying 
inequality \eqref{ineq:Kinf} to the term $\Big| \frac{\partial x}{\partial l} \Big|^{2q-1} \Big| \frac{\partial u}{\partial l}\Big|$ with $g$ yields 
\begin{align}
\int_0^L \Big| \frac{\partial x}{\partial l} \Big|^{2q-1} \Big|
\frac{\partial u}{\partial l}\Big|  dl 
\leq 
\omega\int_0^L\xi\left(\Big| \frac{\partial x}{\partial l} \Big|^{2q}\right) dl
 + \int_0^L \Big| \frac{\partial u}{\partial l}\Big| g^{-1}\Big(\frac{1}{\omega}\Big| \frac{\partial u}{\partial l}\Big|\Big) dl
\label{eq:Temp_Estimate_2}
\end{align}
for $\omega>0$. 
From \eqref{BasicEstimate_For_Other_U}, \eqref{eq:LinTermEstimate} and
the definition of $\xi$ it follows that 
\begin{align}
\tfrac{1}{2q (2q-1)} \dot{V} 
\leq& - \frac{c\pi^2}{q^2L^2}V(x) 
-\int_0^L \eta\left(\left(\tfrac{\partial x}{\partial l}\right)^{2q}\right) dl
- \int_0^L \Big( \frac{\partial x}{\partial l} \Big)^{2q-2} \frac{\partial^2 x}{\partial l^2} u  dl 
\nonumber \\
=& - \frac{\epsilon\pi^2}{q^2L^2} V(x) 
-\int_0^L \xi\left(\left(\tfrac{\partial x}{\partial l}\right)^{2q}\right) dl
- \int_0^L \Big( \frac{\partial x}{\partial l} \Big)^{2q-2} \frac{\partial^2 x}{\partial l^2} u  dl .
\label{BasicEstimate_For_Other_Ub}
\end{align}
Substituting \eqref{eq:Temp_Estimate} and \eqref{eq:Temp_Estimate_2} into \eqref{BasicEstimate_For_Other_Ub} we obtain
\begin{align}
\tfrac{1}{2q (2q-1)} \dot{V}(x) 
\leq& -\frac{\epsilon\pi^2}{q^2L^2} V(x) 
-\left(\!1-\frac{\omega}{2q-1}\!\right)\int_0^L \xi\left(\left(\tfrac{\partial x}{\partial l}\right)^{2q}\right) dl
\nonumber\\
\quad& + \frac{1}{2q-1}\int_0^L \Big| \frac{\partial u}{\partial l}\Big| g^{-1}\Big(2\Big| \frac{\partial u}{\partial l}\Big|\Big) dl.
\nonumber 
\end{align}
Let $\omega < 2q-1$. In view of \eqref{eq:xihatal} this implies
\begin{align}
\tfrac{1}{2q (2q-1)} \dot{V}(x) 
&\leq -\frac{\epsilon\pi^2}{q^2L^2} V(x) 
-\left(\!1-\frac{\omega}{2q-1}\!\right)\hat{\alpha}(V(x)) 
\nonumber \\
& \quad+ \frac{1}{2q-1}\int_0^L \Big| \frac{\partial u}{\partial l}\Big| g^{-1}\Big(\frac{1}{\omega}\Big| \frac{\partial u}{\partial l}\Big|\Big) dl.
\label{Last_estim_altern_U}
\end{align}
Recall that $\epsilon\geq 0$. 
Property \eqref{eq:Assumption_on_alpha} satisfied with $g \in \Kinf$
implies $\hat{\alpha}\in\K_\infty$.  
Consequently \eqref{Last_estim_altern_U} means that 
$V$ is an ISS Lyapunov function of \eqref{Nonlinear_Parabolic_W_12q} with respect to input space $U=U_g$. 
\end{proof}

\begin{remark}
In the above proof, several times we have used integration by parts as well as partial derivatives of $x$ and $u$ and thus the derivations are justified if the functions are smooth enough. Notice that having established estimates of $\dot{V}$ for the spaces of smooth functions, which are dense subspaces of $X$ and $U$ respectively, the density argument as in \cite[Propositions 2,3, proof of Theorem 6]{MiI14b} ensures the result on the whole spaces $X$ and $U$.
\end{remark}

Items \ref{item:ISSLFitem1} and \ref{item:ISSLFitem2} 
of Theorem \ref{theorem:parabolic_ISS_W_12q_norm} 
demonstrate that different choices of input spaces result in 
different properties of a single system even if the state space 
is the same. 
Item \ref{item:ISSLFitem2} of Theorem 
\ref{theorem:parabolic_ISS_W_12q_norm} 
also illustrates that ISS does not necessarily imply an exponential decay rate. 
In contrast, for Item~\ref{item:ISSLFitem1} of 
Theorem \ref{theorem:parabolic_ISS_W_12q_norm}, 
the constructed function $V$ is guaranteed to 
exhibit an exponential or faster decay rate globally. 
As this is the case for finite dimensional systems, 
it is observed for infinite dimensional systems that 
according to \eqref{DissipationIneq} with $\alpha \in \PD$ which 
can be bounded, 
iISS allows the decay rate of $V$ to be much slower 
for large magnitude of state variables than ISS can allow. 
This indicates that significantly different constructions for iISS 
Lyapunov functions are needed. Next section is devoted to this question.


\section{{\rm i}ISS of a class of nonlinear parabolic systems: $L_p$ state space}
\label{sec:parablic}

Consider a system 
\begin{equation}
\label{GekoppelteNonLinSyst_ODE-PDE}
\frac{\partial x}{\partial t} = c \frac{\partial^2 x}{\partial l^2} + f(x(l,t),u(l,t)) ,\quad  \forall  t>0
\end{equation}
defined on the spatial domain $(0,L)$ with 
\begin{eqnarray}
x(0,t) \frac{\partial x}{\partial l}(0,t) = x(L,t) \frac{\partial x}{\partial l}(L,t)=0, 
\quad \forall t \geq 0 
\label{eq:BoundaryConditions_1eq}
\end{eqnarray}
which represents boundary conditions of Dirichlet, Neumann or mixed type. 
The state space for \eqref{GekoppelteNonLinSyst_ODE-PDE} we choose as $X=L_{2q}(0,L)$ for some $q \in \N$ and input space we take as $U=L_{\infty}(0,L)$ and $H^1_0(0,L)$.

Define the following ODE associated with 
\eqref{GekoppelteNonLinSyst_ODE-PDE} given by
\begin{eqnarray}
\dot{y}(t)=f(y(t),u(t)) , \quad y(t),u(t)\in\R.
\label{eq:ODEsys}
\end{eqnarray}
The next theorem provides a construction of an iISS Lyapunov function for a class of nonlinear systems of the form \eqref{GekoppelteNonLinSyst_ODE-PDE}.
\begin{Satz}
\label{theorem:parabplic_iISS}
Suppose that $W: y \mapsto y^{2q}$ satisfies 
\begin{eqnarray}
\label{eq:LF_for_ODEsysi}
\dot{W}(y): = 2q y^{2q-1} f(y,u) \leq - \alpha(W(y)) + W(y)\sigma(|u|)
\end{eqnarray}
for some $\alpha\in\Kinf\cup\{0\}$, $\sigma \in \K$.
Let any of the following 
conditions hold:
\begin{enumerate}
\item\label{item:iISS_boundary} 
$x(0,t)=0$ for all $t\geq 0$ or $x(L,t)=0$ for all $t\geq 0$.
\item\label{item:iISS_alpha} 
$\alpha$ is convex and $\Kinf$. 
\end{enumerate}
Then an iISS Lyapunov function of \eqref{GekoppelteNonLinSyst_ODE-PDE} with 
\eqref{eq:BoundaryConditions_1eq} with respect to the spaces of input values
$U=L_{\infty}(0,L)$ as well as $U=H^1_0(0,L)$ 
is given by
\begin{align}
V(x)=\ln(1+Z(x)) , 
\label{eq:LF_for_PDEi}
\end{align}
where $Z$ is defined as 
\begin{align}
Z(x)=\int_0^L W(x(l)) dl = \|x\|_{L_{2q}(0,L)}^{2q}.
\label{eq:LF_for_PDE}
\end{align}
Furthermore, if $\alpha$ is convex and satisfies 
\begin{align}
\label{eq:parabolic_alporder}
\liminf_{s\to\infty}\dfrac{\alpha(s)}{s}=\infty, 
\end{align}
then $V$ given above is 
an ISS Lyapunov system of \eqref{GekoppelteNonLinSyst_ODE-PDE} with 
\eqref{eq:BoundaryConditions_1eq} with respect to 
$U=L_{\infty}(0,L)$ as well as $U=H^1_0(0,L)$.
\end{Satz}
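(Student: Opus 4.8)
The plan is to verify the two defining properties of an (i)ISS Lyapunov function from Definition~\ref{Def:iISSV} for $V(x)=\ln(1+Z(x))$, exploiting that $Z(x)=\|x\|_{L_{2q}(0,L)}^{2q}=\|x\|_X^{2q}$. The sandwich bounds \eqref{LyapFunk_1Eig} are immediate: $V(x)=\ln(1+\|x\|_X^{2q})$, so one may take $\psi_1=\psi_2\colon s\mapsto\ln(1+s^{2q})\in\Kinf$. The substance is the dissipation inequality \eqref{DissipationIneq}, and the logarithm plays exactly Sontag's role for bilinear-type terms: since $\dot V=\dot Z/(1+Z)$, an input contribution of the form $Z\,\sigma(|u|)$, which would be unbounded in the state if fed into $Z$ directly, becomes bounded by $\sigma(|u|)$ after division by $1+Z$.

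First I would compute $\dot Z$ along smooth solutions of \eqref{GekoppelteNonLinSyst_ODE-PDE}, differentiating under the integral and substituting the equation to get $\dot Z=\int_0^L 2q\,x^{2q-1}\big(c\,\tfrac{\partial^2 x}{\partial l^2}+f(x,u)\big)\,dl$. Integrating the diffusion part by parts, the boundary term $2qc\,\big[x^{2q-1}\tfrac{\partial x}{\partial l}\big]_0^L$ vanishes, because $x^{2q-1}\tfrac{\partial x}{\partial l}=x^{2q-2}\big(x\,\tfrac{\partial x}{\partial l}\big)$ and the boundary condition \eqref{eq:BoundaryConditions_1eq} forces $x\,\tfrac{\partial x}{\partial l}=0$ at $l=0,L$; what remains is the manifestly nonpositive term $-2qc(2q-1)\int_0^L x^{2q-2}\big(\tfrac{\partial x}{\partial l}\big)^2\,dl$. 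For the reaction part, the pointwise hypothesis \eqref{eq:LF_for_ODEsysi} gives $2q\,x^{2q-1}f(x,u)\le-\alpha(x^{2q})+x^{2q}\sigma(|u|)$. Using $\sigma(|u(l)|)\le\sigma(\|u\|_{L_\infty(0,L)})$ and $\int_0^L x^{2q}\,dl=Z$, division by $1+Z$ yields
\[
\dot V\le-\frac{2qc(2q-1)}{1+Z}\int_0^L x^{2q-2}\Big(\tfrac{\partial x}{\partial l}\Big)^2 dl-\frac{1}{1+Z}\int_0^L\alpha(x^{2q})\,dl+\sigma\big(\|u\|_{L_\infty(0,L)}\big),
\]
whose last term is already of the required form $\sigma(\|u(0)\|_U)$ for $U=L_\infty(0,L)$; for $U=H^1_0(0,L)$ I would compose $\sigma$ with the one-dimensional embedding $\|u\|_{L_\infty}\le C\|u\|_{H^1_0}$ to obtain a new $\K$-gain.

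The two alternative hypotheses then supply the negative definite term by two distinct mechanisms. Under Condition~\ref{item:iISS_alpha} ($\alpha$ convex and $\Kinf$) I would discard the nonpositive diffusion integral and apply Jensen's inequality to $\alpha$, obtaining $\int_0^L\alpha(x^{2q})\,dl\ge L\,\alpha(Z/L)$, hence a decay rate $\tilde\alpha(r):=L\,\alpha(r^{2q}/L)/(1+r^{2q})\in\PD$. Under Condition~\ref{item:iISS_boundary} (say $x(0,t)=0$), where $\alpha$ may even vanish, I would instead keep the diffusion term, rewrite $\int_0^L x^{2q-2}\big(\tfrac{\partial x}{\partial l}\big)^2 dl=q^{-2}\int_0^L\big(\tfrac{\partial}{\partial l}(x^q)\big)^2 dl$, and apply Friedrichs' inequality to $x^q$ (legitimate since $x(0)=0$ gives $x^q(0)=0$) to bound it below by $C\,L^{-2}Z$; dropping the still-nonpositive $\alpha$-integral leaves a decay rate $\kappa\,r^{2q}/(1+r^{2q})\in\PD$. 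In either case \eqref{DissipationIneq} holds with a $\PD$ decay rate, so $V$ is an iISS Lyapunov function with respect to both input spaces.

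Finally, for the ISS claim I would use the convex superlinear $\alpha$ and the same Jensen estimate to produce $\tilde\alpha(r)=L\,\alpha(r^{2q}/L)/(1+r^{2q})$ and check the ISS condition \eqref{eq:ISSalphsig}: writing $w=r^{2q}/L\to\infty$ gives $\tilde\alpha(r)=L\,\alpha(w)/(1+Lw)\ge\alpha(w)/(2w)\to\infty$ by \eqref{eq:parabolic_alporder}, so $\liminf_{r\to\infty}\tilde\alpha(r)=\infty$ and the first alternative of \eqref{eq:ISSalphsig} is met. The remaining points are bookkeeping: justifying the formal manipulations on the dense subspace of smooth functions and extending by the density argument already invoked in the remark after Theorem~\ref{theorem:parabolic_ISS_W_12q_norm}. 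I expect the main obstacle to be precisely securing strict decay when $\alpha$ is weak or identically zero, which is what forces one to retain the diffusion term and hence to impose the one-sided Dirichlet endpoint of Condition~\ref{item:iISS_boundary}; recognizing that the convexity of Condition~\ref{item:iISS_alpha} lets one discard the diffusion instead is the conceptual crux that splits the proof into its two cases.
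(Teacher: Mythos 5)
Your proposal is correct and follows essentially the same route as the paper's proof: differentiate $Z$, integrate the diffusion term by parts (the boundary term vanishing by \eqref{eq:BoundaryConditions_1eq}), insert the pointwise dissipation \eqref{eq:LF_for_ODEsysi} with $\sigma(|u(l)|)\le\sigma(\|u\|_{L_\infty(0,L)})$, then split into the same two cases --- a spectral-gap inequality applied to $x^q$ under Condition \ref{item:iISS_boundary}, Jensen's inequality applied to the convex $\alpha$ under Condition \ref{item:iISS_alpha} --- followed by the same superlinearity check for ISS and the same passage to $U=H_0^1(0,L)$ via the one-dimensional embedding $\|u\|_{L_\infty(0,L)}\le C\|u\|_{H_0^1(0,L)}$ (Agmon plus Friedrichs in the paper). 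One small slip worth noting: under Condition \ref{item:iISS_boundary} only \emph{one} endpoint of $x^q$ vanishes, so the paper's Friedrichs inequality (stated for $H_0^1(0,L)\cap H^2(0,L)$, i.e.\ both endpoints zero) is not the right citation; the correct tool is the one-endpoint Poincar\'e inequality with the weaker constant $\pi^2/(4L^2)$ --- exactly what the paper invokes --- which is harmless here since you only claim a bound of the form $CL^{-2}Z$ with a generic constant $C>0$.
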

\begin{proof}
Consider $Z$ given by \eqref{eq:LF_for_PDE} and let $U=L_{\infty}(0,L)$.
Using \eqref{eq:LF_for_ODEsysi} 
we have
\begin{align}
\dot{Z}(x) =& 2q \int_0^L x^{2q-1}(l,t) 
\cdot \left(c \frac{\partial^2 x}{\partial l^2}(l,t) + f(x(l,t),u(l,t)) \right)dl 
\nonumber \\
\leq& -2q (2q-1)c \int_0^L x^{2q-2} \left(\frac{dx}{dl} \right)^2 dl 
\nonumber \\
& \quad\quad\quad\quad\quad+ \!\int_0^L \hspace{-1.2ex}\!\left\{-\alpha\big(W(x(l,t))\big) \!+\! W(x(l,t)) \sigma(|u(l,t)|)\!\right\} dl  
\nonumber \\
\leq& - \frac{2(2q-1)c}{q} \int_0^L \left(\frac{d}{dl}(x^q) \right)^2 dl 
-\int_0^L \hspace{-.2ex}\alpha\big(W(x(l,t))\big)dl 
 + Z(x) \sigma(\|u(\cdot,t)\|_{U}).
\label{eq:parabolic_iISS_V}
\end{align}
In the last estimate we have used boundary conditions \eqref{eq:BoundaryConditions_1eq}. 

First, suppose that Item \ref{item:iISS_boundary} holds. 
Since $x \in W^{2q,1}(0,L)$ then $x^q \in L_2(0,L)$ and $\frac{d}{dl}(x^q)=qx^{q-1}\frac{dx}{dl} \in L_2(0,L)$ due to H\"older's inequality (since $\frac{dx}{dl} \in L_{2q}(0,L)$). 
Overall, we have $x^q \in W^{2,1}(0,L)$. Applying Poincare's inequality to the first term in 
\eqref{eq:parabolic_iISS_V}, we obtain 
\begin{align*}
\dot{Z}(x) \leq - \frac{2(2q-1)c}{q} \frac{\pi^2}{4L^2}Z(x) + Z(x) \sigma(\|u(\cdot,t)\|_{U}) 
\end{align*}
with the help of $x(0,t)=0$ for all $t\geq 0$ or $x(L,t)=0$ for all $t\geq 0$. Defining $V$ as in \eqref{eq:LF_for_PDEi} results in 
\begin{align}
\dot{V}(x) \leq &- \frac{2(2q-1)c}{q} \frac{\pi^2}{4L^2} \frac{\|x\|_X^{2q}}{1+\|x\|_X^{2q}} + \sigma(\|u(\cdot,t)\|_{U}) . 
\label{Zderivative}
\end{align}
Recall that $X=L_{2q}(0,L)$.
Hence, Definition \ref{Def:iISSV} indicates that $V$ is 
an iISS Lyapunov function of \eqref{GekoppelteNonLinSyst_ODE-PDE} 
with boundary conditions \eqref{eq:BoundaryConditions_1eq} 
for the space $U=L_\infty(0,\infty)$. 

Next, assume that Item \ref{item:iISS_alpha} is satisfied. 
Due to the convexity of $\alpha$, Jensen's inequality in 
\eqref{eq:parabolic_iISS_V} allows us to obtain 
\begin{align*}
\dot{Z}(x) \leq 
-L\alpha\left(\frac{1}{L}Z(x(l,t))\right)
+   Z(x) \sigma(\|u(\cdot,t)\|_{U}) .
\end{align*}
For $V$ in \eqref{eq:LF_for_PDEi} we have 
\begin{align}
\dot{V}(x) \leq
-\frac{L\alpha\left(\frac{1}{L}\|x\|_X^{2q}\right)}{1+\|x\|_X^{2q}}
+ \sigma(\|u(\cdot,t)\|_{U}) .
\label{Zderivative2}
\end{align}
According to Definition \ref{Def:iISSV}, the function $V$ is 
an iISS Lyapunov function of \eqref{GekoppelteNonLinSyst_ODE-PDE} 
with boundary conditions \eqref{eq:BoundaryConditions_1eq} 
for the space of input values $U=L_\infty(0,\infty)$. 
Since \eqref{eq:parabolic_alporder} implies 
$\liminf_{s\to\infty}{\alpha(s)}/({1+Ls})=\infty$, 
the above inequality guarantees that $V$ is 
an ISS Lyapunov function in the case of \eqref{eq:parabolic_alporder}. 

Finally, 
to deal with the space $H^1_{0}(0,\pi)$ for the input values, 
we recall Agmon's inequality \eqref{ineq:Agmon}, which implies for $u \in H^1_0(0,\pi)$
\begin{align*}
\|u\|^2_{L_{\infty}(0,L)} \leq 
\|u\|^2_{L_{2}(0,L)} + 
\Big\|\frac{\partial u}{\partial l}\Big\|^2_{L_{2}(0,L)}.
\end{align*}
This inequality yields 
\begin{align}
\|u\|^2_{L_{\infty}(0,L)} \leq 
\left(\frac{L^2}{\pi^2}+1\right) \|u\|^2_{H^1_{0}(0,L)}.
\label{eq:agmonfried0}
\end{align}
with the help of Friedrichs' inequality \eqref{ineq:Friedrichs}. 
Substitution of \eqref{eq:agmonfried0} into 
\eqref{Zderivative} and \eqref{Zderivative2}, 
proves that $V$ is 
an iISS Lyapunov function of 
\eqref{GekoppelteNonLinSyst_ODE-PDE}-\eqref{eq:BoundaryConditions_1eq} 
with respect to $U=H^1_{0}(0,\pi)$ under either Item \ref{item:iISS_boundary} or Item \ref{item:iISS_alpha}.
\end{proof}

\begin{remark}
We want to stress a reader's attention on the choice of an input space. First we have proved iISS of the system \eqref{eq:bilinear1} for the input space $L_{\infty}(0,L)$. For many applications this choice of input space is reasonable and sufficient. However, when considering interconnections of control systems, the input to one system is a state of another system. Thus, having $L_{\infty}(0,L)$ as an input space of the first subsystem automatically means that it is a state space of another subsystem, which complicates the proof its ISS, since the constructions of Lyapunov functions for this choice of state space are hard to find (e.g. how to differentiate such Lyapunov functions?), if possible. As we have seen in Section~\ref{sec:ISS_Parabolic}, this is not the case if we choose $H^1_0(0,L)$ as a state space. This underlines the role of the Agmon's inequality in our constructions, which made possible the transition from the space $L_{\infty}(0,L)$ to $H^1_0(0,L)$ in the previous theorem. 
\end{remark}

Note that the term $W(y)\sigma(|u|)$ in \eqref{eq:LF_for_ODEsysi} allows to analyze PDEs \eqref{GekoppelteNonLinSyst_ODE-PDE} with bilinear or generalized bilinear terms which do not possess ISS property. 


\section{{\rm i}ISS of a class of nonlinear parabolic systems: Sobolev state space}
\label{sec:parablicsobolev}

Instead of the $L_2$ state space we used for characterizing iISS
in Section \ref{sec:parablic}, for a class of parabolic systems, 
this section demonstrates that iISS can be established 
with Sobolev state space. 
We consider 
\begin{align}
\frac{\partial x}{\partial t} = c \frac{\partial^2 x}{\partial l^2} + f\big(x(l,t),\tfrac{\partial x}{\partial l}(l,t)\big)
+ \dfrac{\partial x}{\partial l}(l,t)u(l,t)
\label{eq:bilinear1}
\end{align}
defined for $(l,t) \in (0,L) \times (0,\infty)$ with the Dirichlet boundary conditions
\begin{eqnarray}
x(0,t) = x(L,t) =0, \quad \forall t \geq 0 .
\label{eq:bilinear1boundary}
\end{eqnarray}
We take $X=W^{1,2q}_0(0,L)$, $q \in \N$. 
Modifying Item \ref{item:ISSLFitem1} of 
Theorem \ref{theorem:parabolic_ISS_W_12q_norm}, 
we can verify the following. 

\begin{Satz}\label{theorem:bilinear1}
Suppose that 
\eqref{eq:Assumption_on_f} holds for all $x \in X$ with 
some convex continuous function $\eta: \R_+\to \R$ 
and some $\epsilon\in\R_+$ such that 
\eqref{eq:Assumption_on_a} holds. 
If $\epsilon>0$, then 
the function $V$ given by 
\begin{align}
&
V(x)=\ln(1+Z(x)) , 
\label{eq:bilinear1V}
\\
&
Z(x)=\int_0^L \Big( \frac{\partial x}{\partial l} \Big)^{2q} dl = \|x\|_{W^{1,2q}_0(0,L)}^{2q} 
\label{eq:bilinear1Z}
\end{align}
is an iISS Lyapunov function of 
\eqref{eq:bilinear1}-\eqref{eq:bilinear1boundary} with respect to the space $U=L_\infty(0,L)$ of input values and $U=H^1_0(0,L)$ as well. 
\end{Satz}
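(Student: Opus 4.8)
The plan is to mirror the proof of Item~\ref{item:ISSLFitem1} of Theorem~\ref{theorem:parabolic_ISS_W_12q_norm}, replacing the additive input $u$ by the bilinear term $\frac{\partial x}{\partial l}u$, and then to compose the result with the logarithm exactly as in Theorem~\ref{theorem:parabplic_iISS}. First I would differentiate $Z$ from \eqref{eq:bilinear1Z} along \eqref{eq:bilinear1}, integrate by parts once in $l$, and use the Dirichlet conditions \eqref{eq:bilinear1boundary} (which force $\frac{\partial x}{\partial t}=0$ at $l=0,L$) to discard the boundary terms, obtaining
\begin{align*}
\tfrac{1}{2q(2q-1)}\dot{Z}
= -\int_0^L \Big(\tfrac{\partial x}{\partial l}\Big)^{2q-2}\tfrac{\partial^2 x}{\partial l^2}
\Big(c\tfrac{\partial^2 x}{\partial l^2}+f\big(x,\tfrac{\partial x}{\partial l}\big)+\tfrac{\partial x}{\partial l}u\Big)\,dl .
\end{align*}
This is the computation leading to \eqref{BasicEstimate_For_Other_U}, with the extra factor $\frac{\partial x}{\partial l}$ multiplying $u$; after invoking \eqref{eq:Assumption_on_f} the only genuinely new contribution is $-\int_0^L (\frac{\partial x}{\partial l})^{2q-1}\frac{\partial^2 x}{\partial l^2}u\,dl$.

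The crux, and the step I expect to be the main obstacle, is to bound this bilinear term so that the input enters \emph{multiplicatively}, as $Z(x)$ times a function of $\|u\|$, rather than additively. I would deliberately avoid integrating it by parts (that would require a derivative of $u$, which is unavailable for $U=L_\infty(0,L)$); instead I would apply Young's inequality to the splitting $(\frac{\partial x}{\partial l})^{q-1}\frac{\partial^2 x}{\partial l^2}\cdot(\frac{\partial x}{\partial l})^{q}u$, giving for any $\omega>0$
\begin{align*}
-\int_0^L \Big(\tfrac{\partial x}{\partial l}\Big)^{2q-1}\tfrac{\partial^2 x}{\partial l^2}u\,dl
\leq
\tfrac{\omega}{2}\int_0^L\Big(\tfrac{\partial x}{\partial l}\Big)^{2q-2}\Big(\tfrac{\partial^2 x}{\partial l^2}\Big)^{2}dl
+\tfrac{1}{2\omega}\|u\|_{L_\infty(0,L)}^2\,Z(x).
\end{align*}
The first term is absorbed into the dissipative $-c\int(\frac{\partial x}{\partial l})^{2q-2}(\frac{\partial^2 x}{\partial l^2})^2\,dl$ coming from the diffusion, while the second is the desired multiplicative term. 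From here the argument repeats Theorem~\ref{theorem:parabolic_ISS_W_12q_norm} verbatim: apply Friedrichs' inequality \eqref{eq:LinTermEstimate} to the combined $(\tfrac{\omega}{2}-c)$-term, rewrite $\eta$ through $\xi$ as in \eqref{eq:Assumption_xi} and use Jensen's inequality \eqref{eq:xihatal}, and choose $\omega$ small enough (both $\omega<2c$ and $\omega<2\epsilon$, possible since $c,\epsilon>0$). This yields
\begin{align*}
\dot{Z}\leq -2q(2q-1)\tilde{\alpha}(Z(x))+\tfrac{q(2q-1)}{\omega}\|u\|_{L_\infty(0,L)}^2\,Z(x),
\qquad
\tilde{\alpha}(s):=\big(\epsilon-\tfrac{\omega}{2}\big)\tfrac{\pi^2}{q^2L^2}s+\hat{\alpha}(s).
\end{align*}
Since $\hat{\alpha}\geq 0$ by \eqref{eq:Assumption_on_a} and $\hat{\alpha}(0)=0$ (which follows by evaluating \eqref{eq:Assumption_on_f} at $x\equiv0$, forcing $\eta(0)=0$), the function $\tilde{\alpha}$ lies in $\PD$.

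Finally I would pass to $V=\ln(1+Z)$ as in \eqref{eq:bilinear1V}, following Theorem~\ref{theorem:parabplic_iISS}. Dividing $\dot{Z}$ by $1+Z$ and using $\frac{Z(x)}{1+Z(x)}\leq 1$ removes the factor $Z(x)$ multiplying the input and gives
\begin{align*}
\dot{V}\leq -\frac{2q(2q-1)\,\tilde{\alpha}(\|x\|_X^{2q})}{1+\|x\|_X^{2q}}+\frac{q(2q-1)}{\omega}\|u\|_{L_\infty(0,L)}^2 ,
\end{align*}
so \eqref{DissipationIneq} holds with $\alpha(r)=\frac{2q(2q-1)\tilde{\alpha}(r^{2q})}{1+r^{2q}}\in\PD$ and $\sigma(r)=\frac{q(2q-1)}{\omega}r^2\in\K$, while \eqref{LyapFunk_1Eig} holds with $\psi_1=\psi_2=\ln(1+r^{2q})\in\Kinf$; this establishes the iISS Lyapunov property for $U=L_\infty(0,L)$. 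For $U=H^1_0(0,L)$ I would substitute the Agmon--Friedrichs bound \eqref{eq:agmonfried0} for $\|u\|_{L_\infty(0,L)}^2$, which merely rescales $\sigma$, and the density/regularity remark after Theorem~\ref{theorem:parabolic_ISS_W_12q_norm} justifies the integrations by parts on the smooth dense subspaces. It is worth stressing that the saturation built into $\alpha$ by the logarithm is precisely why one obtains iISS but in general not ISS: $\sigma$ is unbounded whereas $\alpha$ is bounded whenever $\hat{\alpha}$ grows at most linearly, so condition \eqref{eq:ISSalphsig} fails.
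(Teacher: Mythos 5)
Your proposal is correct and follows essentially the same route as the paper: differentiate $Z$ along solutions exactly as in Theorem~\ref{theorem:parabolic_ISS_W_12q_norm}, use \eqref{eq:Assumption_on_f}, Friedrichs' and Jensen's inequalities to absorb the dissipative terms into $\hat{\alpha}$, extract the input multiplicatively as $Z(x)\|u\|^2_{L_\infty(0,L)}$, pass to $V=\ln(1+Z)$ so that $Z/(1+Z)\leq 1$ converts the multiplicative supply into an additive one, and finish with \eqref{eq:agmonfried0} for $U=H^1_0(0,L)$. The only substantive deviation is technical: the paper treats $q=1$ and $q>1$ separately, reusing the two-step Young argument of Theorem~\ref{theorem:parabolic_ISS_W_12q_norm} with the extra parameter $\omega_2$ and obtaining a gain proportional to $\|u\|^{2q}_{L_\infty(0,L)}$ as in \eqref{eq:bilinear1vdot3}--\eqref{eq:bilinear1vdot4}, whereas your single splitting $\big(\tfrac{\partial x}{\partial l}\big)^{q-1}\tfrac{\partial^2 x}{\partial l^2}\cdot\big(\tfrac{\partial x}{\partial l}\big)^{q}u$ handles all $q\in\N$ uniformly and yields $\sigma(r)=\tfrac{q(2q-1)}{\omega}r^2$ — a mild streamlining (one fewer free parameter, no case distinction) rather than a different method, and your explicit verification that $\tilde{\alpha}\in\PD$ via $\eta(0)=0$ is a point the paper leaves implicit.
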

\begin{proof}
As in the proof of Theorem \ref{theorem:parabolic_ISS_W_12q_norm}, 
in the case of $q=1$, along solutions of 
\eqref{eq:bilinear1}-\eqref{eq:bilinear1boundary},  
the function $Z$ in \eqref{eq:LF_W_12q_norm} satisfies 
\begin{align}
\dot{Z} 
\leq & 
2\Big(\frac{\omega}{2}-\epsilon\Big)\frac{\pi^2}{L^2}Z(x) 
-2\hat{\alpha}(Z(x))
+ \frac{1}{\omega}\int_0^L \Big( \frac{\partial x}{\partial l} \Big)^{\!2} u^2 dl.
\nonumber \\
\leq & 
2\Big(\frac{\omega}{2}-\epsilon\Big)\frac{\pi^2}{L^2}Z(x) 
-2\hat{\alpha}(Z(x))
+ \frac{Z(x)}{\omega} \|u\|^{2}_{L_\infty(0,L)}
\label{eq:bilinear1vdot1}
\end{align}
for any $\omega>0$. 
Due to \eqref{eq:bilinear1V} we have 
\begin{align}
\dot{V} 
\leq & 
2\Big(\frac{\omega}{2}-\epsilon\Big)\frac{\pi^2}{L^2}
\frac{\|x\|_X^{2}}{1+\|x\|_X^{2}}
-\frac{2\hat{\alpha}(\|x\|_X^2)}{1+\|x\|_X^{2}}
+ \frac{1}{\omega} \|u\|^{2}_{L_\infty(0,L)}.
\label{eq:bilinear1vdot2}
\end{align}
Pick $\omega\in(0,2\epsilon)$. Then $\epsilon >0$ and property 
\eqref{eq:bilinear1vdot2} imply that 
$V$ is an iISS Lyapunov function with respect to the space 
$U=L_\infty(0,L)$ of input values.

Next consider $q>1$. Again, following the argument used in 
the proof of Theorem \ref{theorem:parabolic_ISS_W_12q_norm}, 
we obtain
\begin{align}
\tfrac{1}{2q (2q-1)} \dot{Z} 
&\leq 
\Big( \big(\frac{\omega}{2}-\epsilon\big)\frac{\pi^2}{q^2L^2} +\omega^{\frac{1}{q-1}}_2 \frac{q-1}{2\omega q}\Big)Z(x) 
\nonumber \\
& \quad - \hat{\alpha}(Z(x)) + \frac{Z(x)}{2\omega \omega_2 q} 
\|u\|^{2q}_{L_\infty(0,L)} .
\label{eq:bilinear1vdot3}
\end{align}
for any $\omega, \omega_2>0$. 
From  \eqref{eq:bilinear1V} it follows that 
\begin{align}
\tfrac{1}{2q (2q-1)} \dot{V} 
&\leq 
\Big( \big(\frac{\omega}{2}-\epsilon\big)\frac{\pi^2}{q^2L^2} +\omega^{\frac{1}{q-1}}_2 \frac{q-1}{2\omega q}\Big)
\frac{\|x\|_X^{2q}}{1+\|x\|_X^{2q}}
\nonumber \\
& \quad - 
\frac{\hat{\alpha}(\|x\|_X^{2q})}{1+\|x\|_X^{2q}}
+ \frac{1}{2\omega \omega_2 q} 
\|u\|^{2q}_{L_\infty(0,L)} .
\label{eq:bilinear1vdot4}
\end{align}
This inequality with sufficiently small $\omega, \omega_2>0$ 
implies that 
$V$ is an iISS Lyapunov function with respect to the space 
$U=L_\infty(0,L)$ of input values. 

To deal with the space $H^1_{0}(0,\pi)$ for the input values, 
substitute \eqref{eq:agmonfried0} into 
\eqref{eq:bilinear1vdot2} and \eqref{eq:bilinear1vdot4}. 
\end{proof}


\section{Interconnections of {\rm i}ISS systems}
\label{GekoppelteISS_Systeme}


Consider the following interconnected system: 
\begin{align}
\label{eq:interconnection}
\begin{array}{l}
\dot{x}_i(t)=A_ix_i(t)+f_i(x_1,x_2,u) , \quad i=1,2 \\
x_i(t)\in X_i , \quad u\in U_c , 
\end{array}
\end{align}
where $X_i$ is a state space of the $i$-th subsystem, $A_i:D(A_i) \to X_i$ is a generator of a strongly continuous semigroup over $X_i$. 
Let $X=X_1\times X_2$ which is the space of $x=(x_1, x_2)$, 
and the norm in $X$ is defined as 
$\|\cdot\|_X=\|\cdot\|_{X_1}+\|\cdot\|_{X_2}$. 
In this section, we assume that 
there exist continuous functions $V_i:X_i \to \R_+$, 
$\psi_{i1},\psi_{i2} \in \Kinf$, $\alpha_i \in \PD$, 
$\sigma_i \in \K$  and $\kappa_i \in \K\cup\{0\}$ 
for $i=1,2$ such that 
\begin{equation}
\label{LyapFunk_1Eigi}
\psi_{i1}(\|x_i\|_{X_i}) \leq V_i(x_i) \leq \psi_{i2}(\|x_i\|_{X_i}), 
\quad \forall x_i \in X_i
\end{equation}
and system \eqref{eq:interconnection} satisfies 
\begin{equation}
\label{GainImplikationi}
\dot{V}_i(x_i) \leq -\alpha_i(\|x_i\|_{X_i})
 + \sigma_i(\|x_{3-i}\|_{X_{3-i}}) + \kappa_i(\|u(0)\|_U)
\end{equation}
for all $x_i\in X_i$, $x_{3-i}\in X_{3-i}$ and $u\in U_c$, 
where the Lie derivative of $V_i$ corresponding to the inputs 
$u\in U_c$ and $v\in PC(\R_+,X_{3-i})$ with $v(0)=x_{3-i}$ 
is defined by
\begin{equation}
\label{LyapAbleitungi}
\dot{V}_i(x_i)=\mathop{\overline{\lim}} \limits_{t \rightarrow +0} {\frac{1}{t}(V_i(\phi_i(t,x_i,v,u))-V_i(x_i)) }.
\end{equation}

To present a small-gain criterion for the interconnected 
system \eqref{eq:interconnection} whose components are not 
necessarily ISS, we make use of 
an generalized expression of inverse mappings on the set of 
extended non-negative numbers $\overline{\R}_+=[0,\infty]$. 
For $\omega\in\K$, define the function $\omega^\ominus$: 
$\overline{\R}_+\to\overline{\R}_+$ as 
$\omega^\ominus(s)=\sup \{ v \in \R_+ : s \geq \omega(v) \}$. 
Notice that $\omega^\ominus(s)=\infty$ holds for 
$s\geq \lim_{\tau\to\infty}\omega(\tau)$, and 
$\omega^\ominus(s)=\omega^{-1}(s)$ holds elsewhere. 
A function $\omega\in\K$ is extended to 
$\omega$: $\overline{\R}_+\to\overline{\R}_+$ as 
$\omega(s) := \sup_{v\in\{y\in\R_+ \, : \, y \leq s\}} \omega(v)$. 
These notations are useful for presenting the following result 
succinctly. 

\begin{Satz}\label{theorem:intercon}
Suppose that 
\begin{align}
\label{eq:alpsig}
\displaystyle\lim_{s\rightarrow\infty}\!\alpha_i(s)=\infty
\ \mbox{or} \ 
\lim_{s\rightarrow\infty}\!\sigma_{3-i}(s)\kappa_i(1)<\infty
\end{align}
is satisfied for $i=1,2$. 
If there exists $c>1$ such that 
\begin{align}
\label{eq:sg}
\psi_{11}^{-1}\circ\psi_{12}\circ
\alpha_1^\ominus\circ c\sigma_1\circ
\psi_{21}^{-1}\circ\psi_{22}\circ
\alpha_2^\ominus\circ c\sigma_2(s)
\leq s
\end{align}
holds for all $s\in\R_+$, then system \eqref{eq:interconnection} is iISS. 
Moreover, if additionally $\alpha_i\in\Kinf$ for $i=1,2$, then system \eqref{eq:interconnection} is ISS. 
Furthermore, 
\begin{align}
\label{eq:Vsum}
V(x)=\int_0^{V_1(x_1)}\lambda_1(s)ds
+\int_0^{V_2(x_2)}\lambda_2(s)ds
\end{align}
is an iISS (ISS) Lyapunov function for \eqref{eq:interconnection}, where $\lambda_i\in\K$ 
is given for $i=1,2$ by 
\begin{align}
\label{eq:lambda}
\lambda_i(s)=[\alpha_i(\psi_{i2}^{-1}(s))]^\psi
[\sigma_{3-i}(\psi_{3-i\,1}^{-1}(s))]^{\psi+1} , \forall s\in \R_+
\end{align}
with an arbitrary $\psi\geq 0$ satisfying 
\begin{align}
\label{eq:tauphi}
\begin{array}{ll}
\psi=0 &, \mbox{if } c> 2 
\\
\displaystyle\psi^{-\frac{\psi}{\psi+1}}< \frac{c}{\psi+1}\leq 1
&, \mbox{otherwise. }
\end{array}
\end{align}
\end{Satz}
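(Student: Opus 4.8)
The plan is to verify that the function $V$ in \eqref{eq:Vsum} is an iISS (resp.\ ISS) Lyapunov function for the interconnection \eqref{eq:interconnection} in the sense of Definition \ref{Def:iISSV}, and then to invoke Proposition \ref{PropSufiISS}. First I would check that $V$ is well defined, continuous, and sandwiched between two $\Kinf$ functions of $\|x\|_X=\|x_1\|_{X_1}+\|x_2\|_{X_2}$: since each $\lambda_i\in\K$, the map $\Lambda_i(r)=\int_0^r\lambda_i(s)\,ds$ lies in $\Kinf$, and combining $V=\Lambda_1(V_1)+\Lambda_2(V_2)$ with the bounds $\psi_{i1}(\|x_i\|_{X_i})\leq V_i(x_i)\leq\psi_{i2}(\|x_i\|_{X_i})$ from \eqref{LyapFunk_1Eigi} yields the required estimate \eqref{LyapFunk_1Eig}. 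Because each $\Lambda_i$ is continuously differentiable and nondecreasing, the chain rule together with subadditivity of the upper Dini derivative \eqref{LyapAbleitungi} gives $\dot V\leq\lambda_1(V_1(x_1))\dot V_1(x_1)+\lambda_2(V_2(x_2))\dot V_2(x_2)$ along solutions.

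Next I would substitute the subsystem dissipation inequalities \eqref{GainImplikationi} and pass to the variables $V_1,V_2$ using the sandwich bounds, namely $\alpha_i(\|x_i\|_{X_i})\geq\alpha_i(\psi_{i2}^{-1}(V_i))$ and $\sigma_i(\|x_{3-i}\|_{X_{3-i}})\leq\sigma_i(\psi_{3-i\,1}^{-1}(V_{3-i}))$. This produces an estimate in which $-\lambda_1(V_1)\alpha_1(\psi_{12}^{-1}(V_1))-\lambda_2(V_2)\alpha_2(\psi_{22}^{-1}(V_2))$ is the decay part, the two cross terms $\lambda_1(V_1)\sigma_1(\psi_{21}^{-1}(V_2))$ and $\lambda_2(V_2)\sigma_2(\psi_{11}^{-1}(V_1))$ encode the interconnection, and $\lambda_1(V_1)\kappa_1(\|u(0)\|_U)+\lambda_2(V_2)\kappa_2(\|u(0)\|_U)$ collects the state-dependent input terms.

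The crux is to absorb the cross terms into a fixed fraction of the decay terms, and this is exactly where the precise form of the weights $\lambda_i$ in \eqref{eq:lambda}, the gain margin $c>1$, and the exponent $\psi$ chosen in \eqref{eq:tauphi} are used. Applying Young's inequality to the products $\lambda_i(V_i)\sigma_i(\cdot)$ with exponents dictated by $\psi$, and invoking the small-gain condition \eqref{eq:sg} to control the composite gain $\psi_{11}^{-1}\circ\psi_{12}\circ\alpha_1^\ominus\circ c\sigma_1\circ\psi_{21}^{-1}\circ\psi_{22}\circ\alpha_2^\ominus\circ c\sigma_2$, I expect to obtain a bound of the form $\lambda_1(V_1)\sigma_1(\psi_{21}^{-1}(V_2))+\lambda_2(V_2)\sigma_2(\psi_{11}^{-1}(V_1))\leq(1-\tfrac1c)\big[\lambda_1(V_1)\alpha_1(\psi_{12}^{-1}(V_1))+\lambda_2(V_2)\alpha_2(\psi_{22}^{-1}(V_2))\big]$, following the finite-dimensional construction of \cite{ItJ09}. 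I anticipate this pointwise domination to be the main obstacle, in particular verifying it uniformly on the extended range where $\alpha_i^\ominus$ may take the value $\infty$ (the genuinely iISS case of bounded $\alpha_i$). Once it holds, a further lower bound via \eqref{LyapFunk_1Eigi} turns the surviving negative term into a genuine $\alpha\in\PD$ of $\|x\|_X$.

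Finally, I would dispose of the state-dependent input terms using condition \eqref{eq:alpsig}: when $\alpha_i$ is unbounded I would split according to whether $\kappa_i(\|u(0)\|_U)$ dominates a small reserved fraction of $\alpha_i(\|x_i\|_{X_i})$, absorbing the former into the leftover decay and, in the complementary case, bounding $\|x_i\|_{X_i}$ and hence $\lambda_i(V_i)$ by a function of $\|u(0)\|_U$ alone; when instead $\lim_{s\to\infty}\sigma_{3-i}(s)\kappa_i(1)<\infty$, the factor $\sigma_{3-i}$ appearing in $\lambda_i$ keeps $\lambda_i(V_i)\kappa_i(\|u(0)\|_U)$ bounded by a $\K$ function of $\|u(0)\|_U$ directly. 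Either way the input contribution is majorized by some $\sigma\in\K$ evaluated at $\|u(0)\|_U$, yielding $\dot V\leq-\alpha(\|x\|_X)+\sigma(\|u(0)\|_U)$ with $\alpha\in\PD$, so that $V$ is an iISS Lyapunov function and Proposition \ref{PropSufiISS} gives iISS. For the ISS claim, the additional hypothesis $\alpha_i\in\Kinf$ makes $\alpha_i^\ominus=\alpha_i^{-1}$ and renders the constructed $\alpha$ unbounded, so \eqref{eq:ISSalphsig} holds, $V$ is an ISS Lyapunov function, and ISS follows again from Proposition \ref{PropSufiISS}.
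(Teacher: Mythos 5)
Your proposal follows essentially the same route as the paper's own proof: the paper likewise establishes the bound $\dot V \leq \sum_{i=1}^2\lambda_i(V_i)\bigl\{-\alpha_i(\psi_{i2}^{-1}(V_i(x_i)))+\sigma_i(\psi_{3-i\,1}^{-1}(V_{3-i}(x_{3-i})))+\kappa_i(\|u\|_U)\bigr\}$ from \eqref{LyapFunk_1Eigi}--\eqref{GainImplikationi}, then absorbs the cross terms and state-dependent input terms into the decay terms via the weighted small-gain argument of \cite{ItJ09,IJD13} under \eqref{eq:sg} and \eqref{eq:alpsig}, and concludes with Proposition \ref{PropSufiISS}. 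Your write-up is in fact somewhat more explicit than the paper's proof (which defers the entire absorption step and the treatment of \eqref{eq:alpsig} to the cited references), but the choice of $V$, the decomposition of $\dot V$, the roles of $\lambda_i$, $c$ and $\psi$, and the final appeal to Proposition \ref{PropSufiISS} are identical.
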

\begin{proof}
For the continuous function $V:X \to \R_+$ given by \eqref{eq:Vsum}, 
we have 
\begin{align*}
\dot{V} \leq & \sum_{i=1}^2\lambda_i(V_i)
\bigl\{-\alpha_i(\psi_{i2}^{-1}(V_i(x_i)))
\\
& \hspace{11ex}
+ \sigma_i(\psi_{3-i\,1}^{-1}(V_{3-i}(x_{3-i}))) + \kappa_i(\|u\|_U)\bigr\}
\end{align*}
along the solution of \eqref{eq:interconnection}, 
due to \eqref{LyapFunk_1Eigi} and \eqref{GainImplikationi}. 
Following the arguments used in \cite{ItJ09,IJD13}, we can verify that 
with \eqref{eq:lambda} and \eqref{eq:tauphi}, the property 
\begin{align*}
\dot{V} \leq & \sum_{i=1}^2
\bigl\{-\delta_i\lambda_i(\psi_{i1}(V_i(x_i)))\alpha_i(\psi_{i2}^{-1}(\psi_{i1}(V_i(x_i)))
\nonumber \\
& \hspace{11ex}+\hat{\kappa}_i(\|u\|_U)\bigr\}
\end{align*}
holds for some $\hat{\kappa_1}, \hat{\kappa}_2\in\K\cup\{0\}$ and constants $\delta_2,\delta_1>0$ 
if \eqref{eq:sg} and \eqref{eq:alpsig} are satisfied. 
In addition, we have $\hat{\kappa}_i=0$ if $\kappa_i=0$. 
Hence, Proposition \ref{PropSufiISS} completes the proof 
with the help of the definition of $\|\cdot\|_X$ and $\lambda_i\in\K$. 
\end{proof}

It is straightforward to see that there always exists 
$\psi\geq 0$ satisfying \eqref{eq:tauphi}. 
It is also worth mentioning that the Lyapunov function \eqref{eq:Vsum} 
is not in the maximization form, employed in \cite{DaM13} 
for establishing ISS. 
The use of the summation form \eqref{eq:Vsum} for systems which 
are not necessarily ISS is motivated by the limitation of 
the maximization form and clarified in \cite{IDW12} 
for finite-dimensional systems. 

\section{Examples}\label{sec:Example}

This section puts the results presented in the preceding sections 
together to analyze two reaction-diffusion systems.

\subsection{Example 1}\label{ssec:Example1}

Consider 
\begin{equation}
\label{GekoppelteNonLinSyst}
\left
\{
\begin{array}{l} 
{\dfrac{\partial x_1}{\partial t}(l,t) = \dfrac{\partial^2 x_1}{\partial l^2}(l,t) + x_1(l,t)x^4_2(l,t),} \\[1.5ex]
{x_1(0,t) = x_1(\pi,t)=0;} \\[1ex]
{\dfrac{\partial x_2}{\partial t} =  \dfrac{\partial^2 x_2}{\partial l^2} + ax_2 - bx_2 \Big(\dfrac{\partial x_2}{\partial l}\Big)^{\!\!2} + \Big( \frac{x^2_1}{1+x^2_1} \Big)^{\!\frac{1}{2}}}\hspace{-1ex}, \\
{x_2(0,t) = x_2(\pi,t)=0.} 
\end{array}
\right.	
\end{equation}
defined on the region $(l,t) \in (0,\pi) \times (0,\infty)$. 
To fully define the system we should choose the state spaces of subsystems. We take $X_1:=L_2(0,\pi)$ for $x_1(\cdot,t)$ and $X_2:=H^1_0(0,\pi)$ for $x_2(\cdot,t)$ 
as in Table \ref{table:iISS} (a).  
We divide the analysis into three parts. First we prove that 
$x_1$-subsystem is iISS based on the development in 
Sections \ref{sec:parablic}. 
Next we prove that the $x_2$-subsystem is ISS using the result 
in Section \ref{sec:ISS_Parabolic}. 
In the last part we exploit the small-gain theorem presented 
in Section \ref{GekoppelteISS_Systeme} 
to prove UGASs of $x=0$ of the overall system \eqref{GekoppelteNonLinSyst}.

\subsubsection{The first subsystem is iISS}\label{sssec:exsys1}

First we invoke Item \ref{item:iISS_boundary} of 
Theorem~\ref{theorem:parabplic_iISS} 
with $q=1$ for $X_1=L_2(0,\pi)$. 
Then $W(y)=y^2$ and due to $\dot{W}(y) \leq 2W(y) |x_2|^4$ 
and $x_1(0,t) = x_1(\pi,t)=0$ for all $t\in\R_+$, 
one can choose
\begin{eqnarray}
V_1(x_1):=\ln\Big(1+\|x_1\|^2_{L_{2}(0,\pi)} \Big)
\label{eq:V1}
\end{eqnarray}
as an iISS Lyapunov function for $x_1$-subsystem. Its Lie derivative according to \eqref{Zderivative} satisfies 
\begin{eqnarray}
\dot{V}_1 \leq  
- \frac{2\|x_1\|_{L_2(0,\pi)}^2}{1+\|x_1\|_{L_2(0,\pi)}^2} 
+ 2\|x_2\|_{L_\infty(0,\pi)}^4 . 
\label{eq:Z1dot}
\end{eqnarray}
Note, that we have put $1=\frac{\pi^2}{\pi^2}$ instead of $\frac{\pi^2}{4\pi^2}$ in formula \eqref{Zderivative} because $x_1=0$ holds at both ends of the interval $[0,\pi]$, and thus less conservative  Friedrichs' inequality instead of Poincare's inequality can be used in getting \eqref{Zderivative}. 
To replace $L_\infty(0,\pi)$ with $X_2=H^1_0(0,\pi)$ 
for the input space used in \eqref{eq:Z1dot}, 
we recall \eqref{eq:agmonfried0}, which results in 
\begin{eqnarray}
\dot{V}_1(x_1) \leq 
- \frac{2\|x_1\|_{L_2(0,\pi)}^2}{1+\|x_1\|_{L_2(0,\pi)}^2} 
+ 8  \|x_2\|^4_{H^1_{0}(0,\pi)}.
\label{eq:Z1dot_Final}
\end{eqnarray}
Thus, we arrive at \eqref{GainImplikationi} for $i=1$, and 
$x_1$-subsystem is iISS with respect to the state space 
$X_1=L_2(0,\pi)$ and the input space $X_2=H^1_0(0,\pi)$.

\subsubsection{The second subsystem is ISS}\label{sssec:exsys2}

We invoke Item \ref{item:ISSLFitem1} of 
Theorem~\ref{theorem:parabolic_ISS_W_12q_norm} with $q=1$. 
To simplify notation we denote 
$u_2:=( {x^2_1}/(1+x^2_1) )^{1/2}$. 
As in \eqref{eq:LF_W_12q_norm}, we take 
\begin{eqnarray}
V_2(x_2)=  \int_0^{\pi}{\Big(\frac{\partial x_2}{\partial l}\Big)^2  dl} = 
 \|x_2\|^2_{H^1_0(0,\pi)} .
\label{eq:LF_H10_Example}
\end{eqnarray}
Notice that $x_2$-subsystem is of the form \eqref{Nonlinear_Parabolic_W_12q} 
with $c=1$, $f(x_2,\tfrac{\partial x_2}{\partial l})=ax_2 - bx_2 (\tfrac{\partial x_2}{\partial l})^2$.
To arrive at \eqref{eq:Assumption_on_f}, we obtain 
\begin{align}
\int_0^L \frac{\partial^2 x_2}{\partial l^2}\Big( ax_2 - bx_2 \Big(\frac{\partial x_2}{\partial l}\Big)^2 \Big) dl
=  - a V_2(x_2) - b\hspace{-.5ex}\int_0^L\hspace{-.5ex}\frac{\partial^2 x_2}{\partial l^2}x_2 \Big(\frac{\partial x_2}{\partial l}\Big)^2 dl 
\end{align}
by integration by parts with 
$x_2(0,t) = x_2(\pi,t)=0$ for all $t\in\R_+$. 
Due to $x_2(0,t) = x_2(\pi,t)=0$ for all $t\in\R_+$, we have
\begin{align*}
\int_0^\pi{ \frac{\partial^2 x_2}{\partial l^2}  x_2 \Big(\frac{\partial x_2}{\partial l}\Big)^2 dl}
=&
- \int_0^\pi{ \frac{\partial x_2}{\partial l} \Big( 2  x_2 \frac{\partial x_2}{\partial l}\frac{\partial^2 x_2}{\partial l^2} +  \Big(\frac{\partial x_2}{\partial l}\Big)^3  \Big) dl} 
\nonumber \\
=&
- \int_0^\pi{  2  x_2 \Big(\frac{\partial x_2}{\partial l}\Big)^2\frac{\partial^2 x_2}{\partial l^2} dl}
- \int_0^\pi{ \Big(\frac{\partial x_2}{\partial l}\Big)^4 dl}, 
\end{align*}
which implies that
\begin{eqnarray}
\int_0^\pi{ \frac{\partial^2 x_2}{\partial l^2}  x_2 \Big(\frac{\partial x_2}{\partial l}\Big)^2 dl}
=
- \frac{1}{3} \int_0^\pi{ \Big(\frac{\partial x_2}{\partial l}\Big)^4 dl}.
\label{Useful_Expression}
\end{eqnarray}
Thus, we arrive at 
\begin{align}
\eta(s)=-as+\frac{b}{3}s^2 , \quad \forall s\in\R_+  
\end{align}
which is convex if $b\geq 0$. We also obtain 
\begin{align}
\hat{\alpha}(s)=(1-a-\epsilon)s+\frac{b}{3\pi}s^2
\label{eq:exhatalpha}
\end{align}
for \eqref{eq:Assumption_on_a}. The inequality in 
\eqref{eq:Assumption_on_a} is achieved for $\epsilon=1-a>0$ if 
$a<1$. Hence, if $a<1$ and $b\geq 0$ hold, 
Item \ref{item:ISSLFitem1} of 
Theorem~\ref{theorem:parabolic_ISS_W_12q_norm} with $q=1$ 
proves that for $\omega\in(0,2(1-a)]$, the function 
$V_2$ satisfies (with $\epsilon = 1-a$)
\begin{align}
\dot{V}_2 \leq -2(1-a-\frac{\omega}{2})V_2(x_2)
-\frac{2b}{3\pi}V_2(x_2)^2
+ \frac{1}{\omega} \|u\|^{2}_{L_{2}(0,\pi)}, 
\label{eq:x2issumod}
\end{align}
as in \eqref{eq:W_12q_Final_q_equals_2}, 
and $V_2$ is an ISS Lyapunov function of $x_2$-subsystem 
with respect to the state space 
$X_2=H^1_0(0,\pi)$ for $x_2(\cdot,t)$ and the input space 
$U_1=L_{2}(0,\pi)$ for $u_2(\cdot,t)$. 
Since $s \mapsto {s}/({1+s})$ is a concave function of 
$s\in\R_+$, Jensen's inequality yields 
\begin{eqnarray*}
\int_0^\pi\hspace{-.7ex}{ \frac{x^2_1}{1+x^2_1} dl} 
\leq \pi \frac{ (1/\pi)\|x_1\|^2_{L_{2}(0,\pi)}}{1+(1/\pi)\|x_1\|^2_{L_{2}(0,\pi)}} 
\leq
\frac{ \pi\|x_1\|^2_{L_{2}(0,\pi)}}{1+\|x_1\|^2_{L_{2}(0,\pi)}} . 
\end{eqnarray*}
Using this property in \eqref{eq:x2issumod} we have 
\begin{align}
\dot{V}_2 \leq 
-2(1-a-\frac{\omega}{2})\|x_2\|^2_{H^1_0(0,\pi)}
-\frac{2b}{3\pi}\|x_2\|^4_{H^1_0(0,\pi)}
+ \frac{\pi}{\omega}\left( 
\frac{\|x_1\|^2_{L_{2}(0,\pi)}}{1+\|x_1\|^2_{L_{2}(0,\pi)}}\right) . 
\label{eq:x2iss}
\end{align}
Therefore, $V_2$ is an ISS Lyapunov function of $x_2$-subsystem 
with respect to the state space 
$X_2=H^1_0(0,\pi)$ for $x_2(\cdot,t)$ and the input space 
$X_1=L_2(0,\pi)$ for $x_1(\cdot,t)$.

Although property \eqref{eq:x2iss} is satisfactory for 
establishing UGASs of the overall 
system \eqref{GekoppelteNonLinSyst}, it may be 
worth mentioning that 
we can obtain a different estimate for 
$\dot{V}_2$ using Item \ref{item:ISSLFitem2} of 
Theorem~\ref{theorem:parabolic_ISS_W_12q_norm}. 
Pick $\epsilon=1-a$. We obtain $\hat{\alpha}(s)=({b}/{3\pi})s^2$ 
from \eqref{eq:exhatalpha}. Then inequality 
\eqref{eq:Assumption_on_a} and  $\epsilon\geq 0$ hold 
if $a\leq 1$ and $b\geq 0$. 
Furthermore, if $b>0$, 
$g:s \mapsto  ({b}/{3}) s^3 $ satisfies
\eqref{eq:Assumption_on_alpha} with $q=1$ 
and it is of class $\Kinf$. Hence 
using $\omega=1/2$ and \eqref{Last_estim_altern_U}, 
we arrive at 
\begin{align}
\dot{V}_2\leq -2(1-a)\|x_2\|^2_{H^1_0(0,\pi)} 
-\frac{b}{3\pi}\|x_2\|^4_{H^1_0(0,\pi)}
+ 
2\left(\frac{6}{b}\right)^{\frac{1}{3}}
\int_0^\pi \Big| \frac{\partial u}{\partial l}\Big|^{4/3} dl , 
\end{align}
which implies that 
$V_2$ is an ISS Lyapunov function 
of $x_2$-subsystem with respect to the input space 
$U_2=W^{1,\frac{4}{3}}_0(0,\pi)$ 
if $a\leq 1$ and $b>0$.

Finally, it is worth noting that if $a >1$,  then 
$x=0$ of the linearization 
of $x_2$-subsystem for $x_1\equiv 0$ is not UGASs (see \cite[Theorem 5.1.3]{Hen81}).  Thus 
$x=0$ of the 
nonlinear $x_2$-subsystem for $x_1\equiv 0$ also cannot be 
UGASs if $a >1$.

\subsubsection{Interconnection is UGASs}

Now we collect the findings of two previous subsections. 
Assume that $a<1$ and $b\geq 0$. 
For the space $X=L_2(0,\pi)\times H^1_0(0,\pi)$, 
the Lyapunov functions defined as \eqref{eq:V1} and 
\eqref{eq:LF_H10_Example} for the two subsystems satisfy 
\eqref{LyapFunk_1Eigi} 
with the class $\K_\infty$ functions 
$\psi_{11}=\psi_{12}: s \mapsto \ln(1+s^2)$ and $\psi_{21}=\psi_{22}: s \mapsto s^2$. 
Due to \eqref{eq:Z1dot_Final} and \eqref{eq:x2iss}, 
we have \eqref{GainImplikationi} for 
\begin{align}
&
\alpha_1(s)=\frac{2s^2}{1+s^2} 
, \quad  
\sigma_1(s)=8s^4
, \quad  
\kappa_1(s)=0
\\
&
\alpha_2(s)=2\left(1\!-\!a\!-\!\frac{\omega}{2}\right)\!s^2 + \frac{2b}{3\pi}\!s^4
, \hspace{1ex} 
\sigma_2(s)=\frac{\pi}{\omega}\left(\!\frac{s^2}{1+s^2}\!\right), \;
\kappa_2(s)=0 
\end{align}
defined with $\omega\in(0,2(1-a)]$. 
For these functions, condition \eqref{eq:sg} holds 
for all $s\in\R_+$ if and only if 
\begin{align}
\frac{12c^2\pi^2}{b\omega}\left(\!\frac{s^2}{1+s^2}\!\right) 
\leq 
\frac{2s^2}{1+s^2} 
, \quad \forall s\in\R_+
\label{eq:exsg}
\end{align}
is satisfied. Thus, there exists $c>1$ such that \eqref{eq:sg} holds 
if and only if ${6\pi^2}/{b}<\omega$ holds. 
Combining this with $\omega\in(0,2(1-a)]$, $a<1$ and $b\geq 0$, 
Theorem \ref{theorem:intercon} establishes UGASs
of $x=0$ for the whole system \eqref{GekoppelteNonLinSyst} when 
\begin{align}
a+\frac{3\pi^2}{b}<1 , \quad  b\geq 0. 
\label{eq:expibassum}
\end{align}
Note that \eqref{eq:alpsig} is satisfied. 
Due to the boundary conditions of $x_2$, Friedrichs' inequality ensures 
$\|x_2(\cdot,t)\|_{L_2(0,\pi)}\leq \|x_2(\cdot,t)\|_{H^1_0(0,\pi)}$. 
Thus, the UGASs guarantees the existence of $\beta\in\KL$ 
such that 
\begin{align}
\left\| \phi(t,\phi_0,0) \right\|_{L_2(0,\pi)\times L_2(0,\pi)}
\leq \left\| \phi(t,\phi_0,0) \right\|_X 
\leq  \beta(\left\| \phi_0 \right\|_{X},t) 
\label{eq:exxestimate}
\end{align}
holds for all $\phi_0 \in X$ and all $t\in\R_+$, 
where $X=L_2(0,\pi)\times H^1_0(0,\pi)$. 


\subsection{Example 2}\label{ssec:Example2}

Consider 
\begin{equation}
\label{GekoppelteNonLinSyst2}
\left
\{
\begin{array}{l} 
{\dfrac{\partial x_1}{\partial t}(l,t) = \dfrac{\partial^2 x_1}{\partial l^2}(l,t) + \dfrac{\partial x_1}{\partial t}(l,t)x^4_2(l,t),} \\[1.5ex]
{x_1(0,t) = x_1(\pi,t)=0;} \\[1ex]
{\dfrac{\partial x_2}{\partial t} =  \dfrac{\partial^2 x_2}{\partial l^2} + ax_2 - bx_2 \Big(\dfrac{\partial x_2}{\partial l}\Big)^{\!\!2} + \Big( \frac{x^2_1}{1+x^2_1} \Big)^{\!\frac{1}{2}}}\hspace{-1ex}, \\
{x_2(0,t) = x_2(\pi,t)=0.} 
\end{array}
\right.	
\end{equation}
defined on the region $(l,t) \in (0,\pi) \times (0,\infty)$. 
For \eqref{GekoppelteNonLinSyst2}, 
we take $X_1:=H^1_0(0,\pi)$ and $X_2:=H^1_0(0,\pi)$ 
as in Table \ref{table:iISS} (b).

\subsubsection{The first subsystem is iISS}\label{sssec:ex2sys1}

We apply Theorem \ref{theorem:bilinear1} to $x_1$-subsystem on 
$X_1=H^1_0(0,\pi)$ by taking $q=1$.  
Let $V_1(x_1)$ be 
\begin{align}
V_1(x_1):=\ln\Big(1+\|x_1\|^2_{H^1_0(0,\pi)} \Big)
\label{eq:V1_2}
\end{align}
We can use $\eta=0$ for \eqref{eq:Assumption_on_f}, which 
is convex on $\R_+$. Let $\epsilon=c=1$. Then 
Property \eqref{eq:Assumption_on_a}
holds with $\hat{\alpha}=0$. 
From \eqref{eq:bilinear1vdot2} with $\omega=1$ 
and \eqref{eq:agmonfried0} it follows that 
\begin{align}
\dot{V}_1 \leq 
-\frac{\|x_1\|_{H^1_0(0,\pi)}^{2}}{1+\|x_1\|_{H^1_0(0,\pi)}^{2}}
+ 4\|x_2\|_{H^1_0(0,\pi)}^4.
\label{eq:exampleV1dot_2}
\end{align}
Hence, inequality \eqref{GainImplikationi} is obtained for $i=1$, 
and the $V_1$ in \eqref{eq:V1_2} is an iISS Lyapunov function 
of $x_1$-subsystem with respect to the state space 
$X_1=H^1_0(0,\pi)$ and the input space $X_2=H^1_0(0,\pi)$. 

\subsubsection{The second subsystem is ISS}\label{sssec:ex2sys2}

Since $x_2$-subsystem of \eqref{GekoppelteNonLinSyst2}
is identical with that of \eqref{GekoppelteNonLinSyst}, 
If $a<1$ and $b\geq 0$ holds, 
the function $V_1$ in \eqref{eq:LF_H10_Example} is 
an ISS Lyapunov function and satisfies \eqref{eq:x2iss} 
with respect to the state space 
$X_2=H^1_0(0,\pi)$ and the input space $X_1=H^1_0(0,\pi)$. 

\subsubsection{Interconnection is UGASs}

The above analysis for system \eqref{GekoppelteNonLinSyst2} 
yields \eqref{LyapFunk_1Eigi} and \eqref{GainImplikationi} for 
$i=1,2$, with functions which are the same as 
those for \eqref{GekoppelteNonLinSyst} except the following change:
\begin{align}
\alpha_1(s)=\frac{s^2}{1+s^2} 
, \quad  
\sigma_1(s)=4s^4. 
\end{align}
Again, condition \eqref{eq:sg} holds 
for all $s\in\R_+$ if and only if \eqref{eq:exsg} is satisfied. 
Hence, Theorem \ref{theorem:intercon} establishes UGASs
of $x=0$ for the whole system \eqref{GekoppelteNonLinSyst2} 
if \eqref{eq:expibassum} holds. 
The UGASs ensures the existence of $\beta\in\KL$ satisfying 
\eqref{eq:exxestimate} 
for all $\phi_0 \in X$ and all $t\in\R_+$ 
in terms of $X=H^1_0(0,\pi)\times H^1_0(0,\pi)$. 
Interestingly, 
in addition, for system \eqref{GekoppelteNonLinSyst2}, 
Agmon's and Friedrichs' inequalities yield 
\begin{align*}
\left\| \phi(t,\phi_0,0) \right\|_{L_\infty(0,\pi) \times L_\infty(0,\pi)}
\leq  \sqrt{2}\beta(\left\| \phi_0 \right\|_{X},t) .
\end{align*}
for all $\phi_0 \in X$ and all $t\in\R_+$.


\section{Conclusion}\label{sec:conc}

We addressed the problem of stability of interconnected nonlinear parabolic systems.
A small-gain criterion has been proposed together with a method to 
construct Lyapunov functions of interconnected systems.
We emphasized the importance of a correct choice of state spaces 
in accordance of iISS subsystems which are not ISS. 
In ISS literature about parabolic systems \cite{DaM13,MaP11} the systems over $L_p$-spaces (which is the simplest possible case) have been studied most extensively.
However, as indicated in \cite{MiI14b}, the presence of a 
bilinear term in a PDE makes the $L_p$ setting 
break down. 
Indeed, pointwise multiplication of state and input 
variables in PDEs defined on $L_2$ state space 
cannot be bounded by the product of 
the spatial $L_2$-norm of the state and the spatial $L_2$-norm of 
the input, while this is true for norms in Euclidean space in case of ODEs. 
Importantly, when two system are connected to each other, a choice 
of state and input and spaces of one system affects the pair of 
the other systems. 
Thus, the bilinearity makes the choice in Table \ref{table:ISS} 
useless, while the choice is often 
satisfactory for interconnections of ISS subsystems. 
In this paper, tools to construct Lyapunov 
functions characterizing iISS of infinite-dimensional systems have 
been developed,  which 
are not covered by ISS Lyapunov functions. In addition, new methods 
for construction of ISS Lyapunov functions for parabolic systems 
over Sobolev spaces have been proposed as well. 
These new developments allowed one to formulate 
interconnections involving iISS subsystems in the 
setting as in Table \ref{table:iISS}, and they have led 
successfully to a small-gain theorem by which 
stability and robustness can be established 
for a class of nonlinear parabolic systems without 
requiring ISS properties. 

%

\section*{Appendix}
For $L>0$, let $W^{2,1}(0,L)$ denote a Sobolev space
of functions $x\in L_2(0,L)$ which have the first 
order weak derivatives, all of which belong to 
$L_2(0,L)$. 

\begin{proposition}[Young's inequality]
\label{theorem:Young}
For all $a,b \geq 0$ and all $\omega,p>0$ it holds
\begin{eqnarray}
\label{ineq:Young}
ab \leq \frac{\omega}{p}a^p + \frac{1}{\omega^{\frac{1}{p-1}}} \frac{p-1}{p}b^{\frac{p}{p-1}}.
\end{eqnarray}
\end{proposition}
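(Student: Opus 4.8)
The plan is to prove the stated inequality
\begin{eqnarray*}
ab \leq \frac{\omega}{p}a^p + \frac{1}{\omega^{\frac{1}{p-1}}} \frac{p-1}{p}b^{\frac{p}{p-1}}
\end{eqnarray*}
by reducing it to the classical Young inequality via a substitution that absorbs the free parameter $\omega$. First I would dispose of the degenerate cases: if $a=0$ or $b=0$ the right-hand side is nonnegative while the left-hand side is $0$, so the inequality holds trivially. Also I should note that the exponent $q:=\frac{p}{p-1}$ is the conjugate (H\"older) exponent of $p$, satisfying $\frac{1}{p}+\frac{1}{q}=1$, which makes the two fractional coefficients $\frac{1}{p}$ and $\frac{p-1}{p}=\frac{1}{q}$ add up to $1$; this is the structural clue that the statement is a weighted form of the standard result.

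The core step is to recall the classical Young inequality in the form $xy \leq \frac{x^p}{p} + \frac{y^q}{q}$, valid for all $x,y\geq 0$ whenever $\frac{1}{p}+\frac{1}{q}=1$ with $p,q>1$. I would then substitute $x=\omega^{1/p}a$ and $y=\omega^{-1/p}b$, so that the product on the left is preserved, $xy=ab$, while the two terms on the right become $\frac{\omega a^p}{p}$ and $\frac{\omega^{-q/p}b^q}{q}$. Since $\frac{q}{p}=\frac{1}{p-1}$ and $q=\frac{p}{p-1}$, the second term is exactly $\frac{1}{\omega^{1/(p-1)}}\frac{p-1}{p}b^{p/(p-1)}$, which matches the claimed right-hand side. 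Alternatively, one can prove the unparametrized version directly by the concavity of the logarithm applied to $\log(xy)=\frac{1}{p}\log(x^p)+\frac{1}{q}\log(y^q)$, or by minimizing $t\mapsto \frac{\omega}{p}t^p - bt$ over $t\geq 0$ and showing the minimum value dominates $-\frac{1}{\omega^{1/(p-1)}}\frac{p-1}{p}b^{p/(p-1)}$; I would keep the substitution route as the cleanest.

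The one genuine subtlety, and the place I would be most careful, is the range of $p$. The statement allows $p>0$, but the conjugate-exponent argument requires $p>1$ so that $q=\frac{p}{p-1}>0$ and the classical inequality applies. For $p=1$ the expression $\omega^{1/(p-1)}$ and the exponent $\frac{p}{p-1}$ are undefined, and for $0<p<1$ the quantity $\frac{p-1}{p}$ is negative and $q<0$, so the inequality as written does not hold in the usual sense; hence I expect the intended and only meaningful regime is $p>1$, and I would state this restriction explicitly (the inequality is invoked in the paper only with integer-type exponents such as $p=2$ and $p=\frac{q}{q-1}>1$ arising from $2q$-powers, so this causes no loss). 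Establishing the correct hypotheses and confirming the algebraic identity $\frac{q}{p}=\frac{1}{p-1}$ is the main obstacle; once that bookkeeping is settled, the inequality follows immediately from the substitution into classical Young.
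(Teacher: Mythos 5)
Your proof is correct, but there is nothing in the paper to compare it against step by step: for this proposition the paper offers no argument at all, only a citation to Mitrinovi\'c (reference [Mit64, p.~20]). Your route is a clean, self-contained derivation: the substitution $x=\omega^{1/p}a$, $y=\omega^{-1/p}b$ into the classical inequality $xy\leq \frac{x^p}{p}+\frac{y^q}{q}$ with conjugate exponent $q=\frac{p}{p-1}$ preserves the product and, via the identities $\frac{q}{p}=\frac{1}{p-1}$ and $\frac{1}{q}=\frac{p-1}{p}$, lands exactly on the stated right-hand side. Your caveat about the hypotheses is also well taken and is in fact a correction to the paper's statement: as written the proposition claims validity for all $p>0$, but for $p=1$ the quantities $\omega^{1/(p-1)}$ and $b^{p/(p-1)}$ are undefined, and for $0<p<1$ the inequality is false (take $a=0$, $b=1$: the left side is $0$ while the right side equals $\frac{p-1}{p}\,\omega^{-1/(p-1)}<0$). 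Since the paper only ever invokes the inequality with $p=2$ and with $p=q>1$ (in the proof of Theorem 1), restricting to $p>1$ as you do costs nothing and makes the statement actually true.
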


\begin{proof}
See \cite[p. 20]{Mit64}.
\end{proof}

\begin{proposition}[$\Kinf$-inequality]
\label{theorem:Kinf}
For all $a,b \geq 0$, for all $g \in \Kinf$ and all $\omega>0$ it holds
\begin{eqnarray}
\label{ineq:Kinf}
ab \leq \omega a g(a)  + b g^{-1}(\tfrac{b}{\omega}).
\end{eqnarray}
\end{proposition}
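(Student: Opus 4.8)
The plan is to establish this pointwise inequality by a direct case analysis, splitting on the relative sizes of $b$ and $\omega g(a)$. Since $g\in\Kinf$, it is a continuous strictly increasing bijection of $\R_+$ onto $\R_+$ with $g(0)=0$, so $g^{-1}$ is well defined, is itself of class $\Kinf$, and in particular is order-preserving; this invertibility is the single structural fact the argument relies on. Moreover both terms on the right-hand side, $\omega a g(a)$ and $b\,g^{-1}(b/\omega)$, are manifestly nonnegative for $a,b\geq 0$ and $\omega>0$, which lets me discard whichever term is not needed in each regime.

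First I would treat the case $b\leq \omega g(a)$: multiplying by $a\geq 0$ gives $ab\leq \omega a g(a)$, and since $b\,g^{-1}(b/\omega)\geq 0$ the claimed bound $ab\leq \omega a g(a)+b\,g^{-1}(b/\omega)$ follows at once. In the complementary case $b\geq \omega g(a)$, equivalently $b/\omega\geq g(a)$, I would apply the increasing map $g^{-1}$ to obtain $g^{-1}(b/\omega)\geq g^{-1}(g(a))=a$, whence $b\,g^{-1}(b/\omega)\geq ab$, and discarding the nonnegative term $\omega a g(a)$ again yields the desired inequality. The two cases overlap at the boundary $b=\omega g(a)$, so together they exhaust all of $\R_+\times\R_+$, and the degenerate values $a=0$ or $b=0$ are covered automatically since then the left-hand side vanishes while the right-hand side remains nonnegative.

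I do not anticipate a genuine obstacle here: the only point requiring care is to verify that in each regime a single term on the right already dominates $ab$, which is precisely what the case split is engineered to guarantee, and this domination hinges on the monotonicity of $g^{-1}$ granted by $g\in\Kinf$. One could alternatively derive the estimate from the classical Young inequality for conjugate convex functions, but the elementary rectangle argument above is cleaner to state and, notably, requires no convexity hypothesis on $g$.
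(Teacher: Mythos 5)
Your proof is correct and follows exactly the paper's own argument: the paper's proof reads ``Follows from estimate of $ab$ for $b \leq \omega g(a)$ and $b \geq \omega g(a)$,'' which is precisely the case split you carried out in detail. Your write-up simply makes explicit the monotonicity of $g^{-1}$ and the nonnegativity of the discarded terms, so nothing differs in substance.
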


\begin{proof}
Follows from estimate of $ab$ for $b \leq \omega g(a)$ and $b \geq \omega g(a)$.
\end{proof}

\begin{proposition}[Jensen's inequality]
\label{theorem:Jensen}
For any convex $f:\R \to \R$ and any summable $x$  
\begin{eqnarray}
\label{ineq:Jensen}
 \int_0^L f(x(l,t)) dl \geq L f\Big( \frac{1}{L} \int_0^L x(l,t)dl\Big).
\end{eqnarray}
\end{proposition}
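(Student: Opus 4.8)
The plan is to reduce the statement to the single geometric fact that a convex function lies above each of its supporting lines. First I would fix the time $t$ and set $\bar{x}:=\frac{1}{L}\int_0^L x(l,t)\,dl$, the spatial mean of $x$; since $x$ is summable, this is a finite real number. The goal then becomes to show $\int_0^L f(x(l,t))\,dl \geq L f(\bar{x})$.

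The key step is to produce a linear minorant of $f$ that agrees with $f$ at $\bar{x}$. Because $f:\R\to\R$ is convex, at the point $\bar{x}$, which is automatically interior since the domain is all of $\R$, the one-sided derivatives exist and satisfy $f'_-(\bar{x})\leq f'_+(\bar{x})$; choosing any slope $\lambda\in[f'_-(\bar{x}),f'_+(\bar{x})]$ yields the subgradient (supporting-line) inequality
\[
f(s)\geq f(\bar{x})+\lambda\,(s-\bar{x}),\qquad \forall s\in\R.
\]
I would establish this directly from the definition of convexity via the monotonicity of secant slopes. I expect this to be the main obstacle, since it is the only place where convexity is genuinely used and where a little real-analysis care is needed; everything after it is bookkeeping.

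With the minorant in hand, the remainder is routine. I substitute $s=x(l,t)$ and integrate over $(0,L)$:
\[
\int_0^L f(x(l,t))\,dl \geq \int_0^L \Big( f(\bar{x})+\lambda\,(x(l,t)-\bar{x}) \Big)\,dl = L f(\bar{x}) + \lambda\Big( \int_0^L x(l,t)\,dl - L\bar{x}\Big).
\]
By the very definition of $\bar{x}$ the bracketed term vanishes, leaving the claimed inequality. A minor technical point I would note in passing is that the linear minorant is itself integrable, so the left-hand side is a well-defined element of $(-\infty,+\infty]$ even before knowing $l\mapsto f(x(l,t))$ is integrable, and the inequality holds in all cases (trivially so when the left-hand integral is $+\infty$).
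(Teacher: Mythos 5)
Your proof is correct: the supporting-line (subgradient) argument at the mean $\bar{x}$, followed by integration and cancellation of the linear term, is the standard proof of Jensen's inequality, and it is essentially the same argument as in the reference the paper points to --- the paper itself gives no proof, only the citation ``See Evans, p.~705''. The only small addition worth making is that convexity of $f$ on all of $\R$ implies continuity, so $l\mapsto f(x(l,t))$ is measurable and the left-hand integral is well defined in $(-\infty,+\infty]$, which together with your domination remark about the linear minorant closes every technical loose end.
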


\begin{proof}
See \cite[p. 705]{Eva2010}.
\end{proof}

\begin{proposition}[Poincare's inequality]
\label{theorem:Wirtinger}
For every $x \in W^{2,1}(0,L)$ it holds that
\begin{align}
\label{Wirtinger_Variation_Ineq}
\frac{4 L^2}{\pi^2} \int_0^L{\left( \frac{\partial x(l)}{\partial l} \right)^2 dl} \geq \int_0^L {x^2(l)}dl
\end{align}
\end{proposition}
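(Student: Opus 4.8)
The plan is to identify the sharp constant $\tfrac{4L^2}{\pi^2}$ as the reciprocal of the smallest eigenvalue of $-\tfrac{d^2}{dl^2}$ on $(0,L)$ with a homogeneous Dirichlet condition at one endpoint and a homogeneous Neumann condition at the other, and to deduce the inequality from the variational (Rayleigh-quotient) characterization of that eigenvalue. I would first observe that the estimate cannot hold for every $x\in W^{2,1}(0,L)$ without qualification, since a nonzero constant function makes the left-hand side vanish while the right-hand side is positive; the inequality requires that $x$ vanish at (at least) one endpoint, which is precisely how it is invoked in the proof of Theorem~\ref{theorem:parabplic_iISS}, where $x(0,t)=0$ or $x(L,t)=0$ is assumed. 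I would therefore prove it under the hypothesis $x(0)=0$, the case $x(L)=0$ following from the reflection $l\mapsto L-l$.

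Before the spectral argument I would record that the elementary route is insufficient for the sharp constant: for $x(0)=0$ one writes $x(l)=\int_0^l \tfrac{\partial x}{\partial l}(s)\,ds$ and applies the Cauchy--Schwarz inequality followed by integration in $l$, obtaining $\int_0^L x^2\,dl\le \tfrac{L^2}{2}\int_0^L \big(\tfrac{\partial x}{\partial l}\big)^2\,dl$. Since $\tfrac{L^2}{2}>\tfrac{4L^2}{\pi^2}$, this bound is strictly weaker than the claimed one, so a finer, spectral estimate is needed.

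Next I would introduce the orthonormal basis of $L_2(0,L)$ consisting of the eigenfunctions $\phi_n(l)=\sqrt{2/L}\,\sin\!\big(\tfrac{(2n-1)\pi l}{2L}\big)$, $n\in\N$, of the self-adjoint operator $-\tfrac{d^2}{dl^2}$ with $u(0)=0$ and $\tfrac{du}{dl}(L)=0$, whose eigenvalues are $\lambda_n=\big(\tfrac{(2n-1)\pi}{2L}\big)^2$, so that $\lambda_1=\tfrac{\pi^2}{4L^2}$ is the smallest. Writing $x=\sum_n c_n\phi_n$ with $c_n=\langle x,\phi_n\rangle$, Parseval gives $\int_0^L x^2\,dl=\sum_n c_n^2$. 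The key computation is that differentiation carries this basis onto the cosine basis: $\phi_n'=\sqrt{\lambda_n}\,\psi_n$ with $\psi_n(l)=\sqrt{2/L}\,\cos\!\big(\tfrac{(2n-1)\pi l}{2L}\big)$, which is itself orthonormal in $L_2(0,L)$. Integrating by parts, the boundary terms vanish because $x(0)=0$ and $\phi_n'(L)=0$, whence $\langle x',\psi_n\rangle=\sqrt{\lambda_n}\,c_n$ and therefore $\int_0^L \big(\tfrac{\partial x}{\partial l}\big)^2\,dl=\sum_n \lambda_n c_n^2$. Bounding $\lambda_n\ge\lambda_1$ term by term yields $\int_0^L \big(\tfrac{\partial x}{\partial l}\big)^2\,dl\ge \tfrac{\pi^2}{4L^2}\int_0^L x^2\,dl$, which is the claim.

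The step I expect to be the main obstacle is the rigorous justification of the diagonalization identity $\int_0^L \big(\tfrac{\partial x}{\partial l}\big)^2\,dl=\sum_n \lambda_n c_n^2$ for a general $x$ in the form domain (that is, $x\in H^1(0,L)$ with $x(0)=0$) rather than for a smooth $x$: this rests on completeness of $\{\phi_n\}$ and $\{\psi_n\}$ in $L_2(0,L)$ and on the fact that $x\mapsto\int_0^L (x')^2$ is exactly the closed quadratic form associated with the mixed boundary value problem. In practice I would establish the inequality first for $x\in C^\infty$ with $x(0)=0$, where the series manipulations and the integration by parts are unproblematic, and then pass to the whole space by a density argument of the same kind already used elsewhere in the paper.
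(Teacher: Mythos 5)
The paper offers no proof of this proposition at all: like the adjacent Friedrichs' inequality it is stated as a known fact without proof or citation (Young's, Jensen's and Agmon's inequalities at least carry references), so there is no argument of the paper's to compare yours against. Your proof is correct, and it supplies something the paper's statement actually needs: you observe that as literally written --- for every $x\in W^{2,1}(0,L)$, which in the paper's (nonstandard) notation is all of $H^1(0,L)$ --- the inequality is false, since any nonzero constant function violates it, and that the missing hypothesis is vanishing at one endpoint. This matches exactly how the proposition is invoked in the proof of Theorem~\ref{theorem:parabplic_iISS}, where Item~\ref{item:iISS_boundary} supplies $x(0,t)=0$ or $x(L,t)=0$. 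Your spectral argument via the mixed Dirichlet--Neumann eigenbasis $\phi_n(l)=\sqrt{2/L}\,\sin\bigl(\tfrac{(2n-1)\pi l}{2L}\bigr)$ correctly identifies $\tfrac{4L^2}{\pi^2}$ as $1/\lambda_1$, and the computations (orthonormality of the differentiated system, vanishing of the boundary terms, the term-by-term bound $\lambda_n\geq\lambda_1$) are sound. One simplification is available at the step you flag as the main obstacle: you do not need the identity $\int_0^L (x')^2\,dl=\sum_n\lambda_n c_n^2$, hence you do not need completeness of the cosine system $\{\psi_n\}$; Bessel's inequality $\int_0^L (x')^2\,dl\geq\sum_n\langle x',\psi_n\rangle^2=\sum_n\lambda_n c_n^2$ holds for any orthonormal system and already yields the claim. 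Completeness is genuinely required only for the sine system $\{\phi_n\}$, to get Parseval for $x$ itself, and the concluding density argument can even be dispensed with, since integration by parts of $\int_0^L x'\psi_n\,dl$ against the smooth functions $\psi_n$ is valid for $H^1$ functions directly.
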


\begin{proposition}[Friedrichs' inequality]
\label{theorem:Friedrichs}
For all $f \in H^1_0(0,L) \cap H^2(0,L)$ it holds that
\begin{align}
\label{ineq:Friedrichs}
\frac{L^2}{\pi^2} \int_0^L{\left( \frac{\partial x(l)}{\partial l} \right)^2 dl} \geq \int_0^L {x^2(l)}dl
\end{align}
\end{proposition}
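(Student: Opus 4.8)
The plan is to prove the sharp inequality \eqref{ineq:Friedrichs} via a Fourier sine (eigenfunction) expansion, exploiting the fact that the smallest Dirichlet eigenvalue of $-d^2/dl^2$ on $(0,L)$ equals $\pi^2/L^2$. First I would reduce to smooth functions: since both sides of \eqref{ineq:Friedrichs} are continuous with respect to the $H^1_0(0,L)$-norm and $C_0^\infty(0,L)$ is dense in $H^1_0(0,L)$, it suffices to establish the estimate for $x \in C_0^\infty(0,L)$ and then pass to the limit.

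For such $x$ I would expand in the orthogonal system $\{\sin(n\pi l/L)\}_{n\geq 1}$, which is complete in $L_2(0,L)$ and consists of eigenfunctions vanishing at both endpoints. Writing $x(l)=\sum_{n\geq 1} b_n \sin(n\pi l/L)$, Parseval's identity gives
\[
\int_0^L x^2 \, dl = \frac{L}{2}\sum_{n\geq 1} b_n^2 .
\]
The key step is to express $\int_0^L (x')^2\,dl$ in the same basis. Integrating by parts and using $x(0)=x(L)=0$, the sine coefficients of $x$ are linked to the cosine coefficients of $x'$ by the factor $n\pi/L$, so that
\[
\int_0^L \Big(\frac{\partial x}{\partial l}\Big)^2 \, dl = \frac{L}{2}\sum_{n\geq 1}\Big(\frac{n\pi}{L}\Big)^2 b_n^2 .
\]
Since $(n\pi/L)^2 \geq (\pi/L)^2$ for every $n\geq 1$, comparing the two series term by term yields $\int_0^L (x')^2\,dl \geq (\pi^2/L^2)\int_0^L x^2\,dl$, which rearranges exactly to \eqref{ineq:Friedrichs}.

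The hard part will be the careful justification that the boundary terms vanish in the integration by parts and that the differentiated series converges to $x'$ in $L_2(0,L)$; this is precisely where the homogeneous Dirichlet condition $x\in H^1_0(0,L)$ (rather than a general $H^1$ function) is indispensable, and it is what pins down the sharp constant $\pi^2/L^2$. As an alternative I would phrase the argument variationally, identifying the left-hand constant as the reciprocal of the minimum of the Rayleigh quotient $\int_0^L (x')^2\,dl \big/ \int_0^L x^2\,dl$ over $H^1_0(0,L)\setminus\{0\}$; the minimum is attained thanks to the Rellich compactness of the embedding $H^1_0(0,L)\hookrightarrow L_2(0,L)$, and its Euler--Lagrange equation $-x''=\lambda x$ with $x(0)=x(L)=0$ forces $\lambda=\pi^2/L^2$ with minimizer $\sin(\pi l/L)$, recovering the same sharp constant.
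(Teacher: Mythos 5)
Your proof is correct, but note that the paper does not prove this proposition at all: like the other facts collected in the Appendix, Friedrichs' inequality is stated as a known classical result (the neighbouring propositions carry only citations, and this one is given without even a reference), so there is no "paper proof" to match. What you supply is a genuine, self-contained argument, and a standard one: the Fourier sine expansion with Parseval's identity gives the inequality with the sharp constant $\pi^2/L^2$ by term-by-term comparison of $(n\pi/L)^2$ with $(\pi/L)^2$, and your variational alternative correctly identifies that constant as the first Dirichlet eigenvalue of $-d^2/dl^2$ on $(0,L)$. Two remarks. First, your argument never uses the $H^2$ part of the hypothesis; it proves the inequality for every $x\in H^1_0(0,L)$, which is strictly more general than the statement (and is in fact the version the paper implicitly uses, e.g.\ when applying \eqref{ineq:Friedrichs} to $\left(\partial x/\partial l\right)^q$ in the proof of Theorem \ref{theorem:parabolic_ISS_W_12q_norm}). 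Second, a small caution on the density step: you should measure convergence in the full $W^{1,2}$-norm (in which $C_0^\infty(0,L)$ is dense in $H^1_0(0,L)$ by definition), not in the derivative-only norm $\|x\|_{H^1_0}=\|\partial x/\partial l\|_{L_2}$ that the paper uses, because continuity of $\int_0^L x^2\,dl$ with respect to that seminorm is exactly the inequality you are trying to prove, and invoking it there would be circular. Alternatively, the density step can be skipped entirely: the integration by parts linking the sine coefficients of $x$ to the cosine coefficients of $\partial x/\partial l$ is already valid for $x\in H^1_0(0,L)$, since such functions are absolutely continuous on $[0,L]$ and vanish at both endpoints.
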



\begin{proposition}[Agmon's inequality]
\label{theorem:Agmon}
For all $f \in H^1(0,L)$ it holds that
\begin{align}
\label{ineq:Agmon}
\|f\|^2_{L_{\infty}(0,L)} \leq |f(0)|^2 + 2\|f\|_{L_{2}(0,L)}\Big\|\frac{df}{dl}\Big\|_{L_{2}(0,L)}.
\end{align}
\end{proposition}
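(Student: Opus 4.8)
The plan is to reduce \eqref{ineq:Agmon} to the fundamental theorem of calculus combined with the Cauchy--Schwarz inequality, relying on the one-dimensional Sobolev embedding $H^1(0,L)\hookrightarrow C[0,L]$, which equips each $f\in H^1(0,L)$ with an absolutely continuous representative. This is precisely what renders the pointwise quantities $f(0)$ and $\|f\|_{L_{\infty}(0,L)}$, as well as the identity below, meaningful in the first place.

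First I would fix an arbitrary $l\in[0,L]$ and integrate the derivative of $f^2$ from $0$ to $l$. Since $f$ is absolutely continuous and bounded, so is $f^2$, and $\tfrac{d}{ds}(f^2)=2f\,\tfrac{df}{ds}$ holds almost everywhere; hence
\begin{align*}
f(l)^2 = f(0)^2 + \int_0^l \frac{d}{ds}\big(f(s)^2\big)\,ds = f(0)^2 + 2\int_0^l f(s)\,\frac{df}{ds}(s)\,ds .
\end{align*}
Bounding the integral over $[0,l]$ by the integral of the absolute value over all of $[0,L]$ and then applying Cauchy--Schwarz gives
\begin{align*}
f(l)^2 \leq f(0)^2 + 2\int_0^L \Big| f(s)\,\frac{df}{ds}(s)\Big|\,ds \leq f(0)^2 + 2\,\|f\|_{L_{2}(0,L)}\Big\|\frac{df}{dl}\Big\|_{L_{2}(0,L)} .
\end{align*}
The right-hand side is independent of $l$, so taking the supremum over $l\in[0,L]$ yields exactly \eqref{ineq:Agmon}.

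The computation is elementary; the only ingredient that is not purely formal is the \emph{regularity step} at the outset, namely that a one-dimensional $H^1$ function admits a continuous (indeed absolutely continuous) representative, so that $f(0)$ is well defined and the fundamental theorem of calculus applies to $f^2$. I expect this to be the main---and essentially the only---point requiring justification, since everything downstream is a one-line estimate. Should one prefer to sidestep absolute-continuity technicalities, an equivalent route is to establish the displayed identity for $f\in C^1[0,L]$ and then pass to the limit via the density of smooth functions in $H^1(0,L)$, exactly in the spirit of the density arguments invoked elsewhere in the paper.
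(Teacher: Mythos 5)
Your proof is correct and complete. Note that the paper itself does not actually prove this proposition; its ``proof'' consists of the citation to Lemma~2.4 of the reference \cite{KrS08}, and the argument you give---the fundamental theorem of calculus applied to $f^2$, followed by Cauchy--Schwarz and a supremum over $l$, with the one-dimensional embedding $H^1(0,L)\hookrightarrow C[0,L]$ justifying the pointwise manipulations---is precisely the standard proof found in that reference. So you have supplied, correctly, the details the paper delegates to the literature; your closing remark about handling the absolute-continuity step either directly or by density of $C^1[0,L]$ in $H^1(0,L)$ is also sound.
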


\begin{proof}
See \cite[Lemma 2.4., p. 20]{KrS08}.
\end{proof}

\bibliographystyle{abbrv}
\bibliography{AndersMir}

%

\end{document}